\newtheorem{theorem}{Theorem}[section]
\newtheorem{lemma}[theorem]{Lemma}
\newtheorem{proposition}[theorem]{Proposition}
\newtheorem{definition}[theorem]{Definition}
\newtheorem{remark}[theorem]{Remark}
\DeclareMathOperator*{\argmin}{argmin}
\DeclareMathOperator*{\Var}{Var}
\newcommand{\minx}{x_*}
\newcommand{\minf}{f^*}
\begin{document}

\title[Accelerated stochastic gradient descent]{Nesterov's method with decreasing learning rate leads to accelerated stochastic gradient descent}
\author{Maxime Laborde and Adam M. Oberman }
\thanks{This material is based upon work supported by the Air Force Office of Scientific Research under award number FA9550-18-1-0167 (A.O.) and by a grant from the Innovative Ideas Program of the Healthy Brains and Healthy Lives initiative (HBHL), McGill University (M.L. and A.O)}
%   \\
%Department of Mathematics and Statistics,\\
% McGill University, \\
% Montreal, Canada\\
%\texttt{\{adam.oberman\}@mcgill.ca} \\
%\And
%\\
%Department of Mathematics and Statistics,\\
% McGill University, \\
% Montreal, Canada\\
%\texttt{\{maxime.laborde\}@mcgill.ca} }

\begin{abstract}
We present a coupled system of ODEs which, when discretized with a constant time step/learning rate, recovers Nesterov's accelerated gradient descent algorithm.  The same ODEs, when discretized with a decreasing learning rate, leads to novel stochastic gradient descent (SGD) algorithms, one in the convex and a second in the strongly convex case.  In the strongly convex case, we obtain an algorithm superficially similar to momentum SGD, but with additional terms.  In the convex case, we obtain an algorithm with a novel order $k^{3/4}$ learning rate.   We prove, extending the Lyapunov function approach from the full gradient case to the stochastic case, that the algorithms converge at the optimal rate for the last iterate of SGD, with rate constants which are better than previously available.
\end{abstract}
\maketitle
\section{Introduction}

%\subsection{List of changes by adam}
%\begin{enumerate}
%	\item change abstract from original AISTATS to newer one
%	\item Check for typos: section 4.3 second equation $t f - f*$ to $f(f-f^*)$. 
% \item elaborated on the proof of proposition 4.2
%\end{enumerate}
%
%TODO
%\begin{enumerate}
%	\item Remove the two ODEs, notation for perturbed, just in the first section.
%\end{enumerate}

%We obtain accelerated stochastic gradient descent (SGD) algorithms as the discretization of a system of first order ODEs.  Using a constant learning rate, the algorithm recovers the standard Nesterov's accelerated gradient descent algorithm.  Using a decreasing learning rate, we obtain new SGD algorithms: one in the convex and second in the strongly convex case.  We prove, using a Lyapunov function approach, that the algorithms converge at the optimal rate for the last iterate of SGD, with rate constants which are better than previously available.

Recently~\citet{su2014differential} showed that Nesterov's method for accelerated gradient descent can be obtained as the discretization of an Ordinary Differential Equation (ODE).   The work of \cite{su2014differential} resulted in a renewed interest in the continuous time approach to first order optimization, for example \cite{wibisono2016variational, wilson2016lyapunov, wilson2019acceleration} with th e objective of (i) developing new insights into accelerated algorithms, and (ii) obtaining new optimization algorithms.  However, so far there has limited progress made on the second objective.  

%There is more than one ODE which can be discretized to obtain Nesterov's method~\citet{shi2018understanding}. 
We propose a new derivation of Nesterov's method as a coupled system of two first order ODEs: the first is gradient descent with the usual learning rate, and the second is a gradient descent with a faster learning rate.  The second equation is unstable, but when coupled to the first equation, and discretizing with a constant learning rate, we obtain Nesterov's method.  Moreover, this system of ODEs decreases a Lyapunov function. In the case of stochastic gradients, choosing a decreasing learning rate for the same system, leads to an accelerated stochastic gradient descent algorithm.  We prove, using a Lyapunov function approach,  that for both the convex and strongly convex cases,By discretizing a specific choice of ODE, we show that: (i) with a constant learning rate, we obtain Nesterov's method, (ii) with a decreasing learning rate, we obtain an accelerated stochastic gradient descent algorithm.

\subsubsection*{{Notation:}} 
Let $f$ be a proper convex function, and write  $\minx = \argmin_x f(x)$ and $\minf = f(\minx)$.  We say $f$ is $L$-smooth if 
\begin{align}
	\label{Basic_Lsmooth}
	f(y) - f(x) + \nabla f(x)\cdot(x-y) &\leq \frac L 2 |x-y|^2, 
%	&& \text{ for all } x,y \in \Rn .
\end{align} 
and $f$ is $\mu$-strongly convex if
\begin{equation}\label{f_mu_strongly_convex_gen}
	f(x) + \nabla f(x)\cdot(y - x) \leq f(y) -  \frac{\mu}{2}|x-y|^2,
%	\qquad \text{ for all } x,y \in \Rn .
\end{equation}
for all $x,y$ respectively.  Write $C_f := \frac{L}{\mu}$ for the condition number of $f$.

\subsection{Stochastic gradient descent}
Stochastic gradient descent (SGD) is the iteration
\begin{equation}
	\label{tempSGD}
	x_{k+1} = x_k - h_k \widehat g(x_k)
\end{equation}
where $h_k$ is the learning rate, and $\widehat g(x_k)$ is an approximation to the gradient  $\nabla f(x_k)$. We study the stochastic approximation setting, meaning that we assume $\widehat g(x,\xi)$ is an unbiased estimator of the gradient $\nabla f(x)$, so that $\mathbb{E}[\widehat{g}(x,\xi)] = \nabla f(x)$, and that the variance is bounded $
\Var(\widehat{g}(x,\xi)) \leq \sigma^2$.  Write 
\begin{equation} \label{perturbed_gradient}
\widehat g(x,\xi)  = \nabla f(x) + e(x, \xi),
\end{equation}
%and make the stochastic approximation assumption on the error, 
where 
\begin{equation}
\label{ass: mean and varience error}
\mathbb{E}[e] = 0 \, \text{  and  } \, \Var(e) \leq \sigma^2.
%\mathbb{E}[ \|e\|^2]  \leq \sigma^2>0.
\end{equation}
 More general assumptions are discussed in \cite{bottou2018optimization}.  The assumption \eqref{ass: mean and varience error} allows us to express our convergence rates in terms of $\sigma^2$.   Relaxing the assumption \eqref{ass: mean and varience error} to cover the finite sum case is the subject of future work.  
The following fundamental inequality can be found in \cite{bottou2018optimization},
\begin{equation}
\label{eq: bottou inequality}
\mathbb{E}[ f(x_k) -f^*] \leq \left(1 - h \mu  \right )^k (f(x_0) -f^*) + {h C_f} \frac{ \sigma^2}{2}, \quad h \leq \frac 1 L 
% \rightarrow \frac{hL\sigma^2}{2\mu}.
\end{equation}
for a constant learning rate, $h ={\alpha}/{L}$.  In particular, with $h$ fixed, the function values converge exponentially, with rate $1 - h\mu$, to within $\frac{hL\sigma^2}{2\mu}$ of the optimal value. In Proposition \ref{prop:convergence neighborhood constant learning rate} we establish an accelerated version of~\eqref{eq: bottou inequality}: for constant learning rate  $h$, 
\begin{equation}
\label{eq: accelerated bottou inequality}
\mathbb{E}[ f(x_k) -f^*] \leq \left (1 - h \sqrt \mu   \right )^k E_0^{SC} 
+   h \frac{\sigma^2}{\sqrt{\mu}},  \quad h \leq \frac 1 {\sqrt{L}}\end{equation}
%\red{factor of 2 removed}
where $E_0^{SC}$ is the initial value of a Lyapunov function, which by \eqref{E0_init}  below, is bounded by $2(f(x_0)-f^*)$. The inequality \eqref{eq: accelerated bottou inequality} shows that the exponential convergence rate of the previous results is accelerated, in the sense that the rate is now $1 - h\sqrt{\mu}$.   In addition, the neighborhood reached in \eqref{eq: accelerated bottou inequality} is smaller than the one reached in \eqref{eq: bottou inequality}. Other papers have studied accelerated versions of \eqref{eq: bottou inequality} using different methods, see for example \cite{Aybat2019,rabbat2020}. 
%Recently, this estimate has been improved using Nesterov's method with stochastic gradient \cite{Aybat2019,rabbat2020}. In their work, they show the usual accelerated rate and improve the constant in front of the variance. 
%Note that recently in \cite{Aybat2019}, the authors performed a similar Lyapunov analysis to obtain an accelerated version of \eqref{eq: bottou inequality} where $\mathbb{E}[f(x_k)]$ converges exponentially fast to a neighborhood of the minimum. In proposition \ref{prop:convergence neighborhood constant learning rate}, we recover this result. In addition, we show that if the initial Lyapunov function is smaller that a critical value, we obtain the accelerated convergence in expectation for the last iterate.

Both \eqref{eq: accelerated bottou inequality} and \eqref{eq: bottou inequality}, show that the convergence is improve by sending either the learning rate, $h$, or the variance, $\sigma^2$ to zero.  Variance reduction algorithms such as Stochastic Variance Reduction Gradient (SVRG) \cite{SVRG_2013}, Stochastic Average Gradient (SAG) \cite{SAG_2017} and SAGA \cite{SAGA_2014} aim at reducing the variance.  These methods consist in defining at each step a stochastic gradient, $\widehat{g}_k$, such that its variance converges to zero. However, in many modern applications, variance reduction is not effective~\cite{Defazio_Bottou_2019}.

\subsection{Improved convergence rate for the last iterate of SGD}
In this work, we focus on convergence rates for the last iterate of SGD, using a decreasing learning rate.

%\begin{equation}
%	\label{alpha_cond}
%	\alpha_k \leq \alpha_0 = \frac 1 L.
%\end{equation}

Convergence results for averages of previous iterates are available in~\cite{Polyak_average_1992, Rakhkin2012, lacoste2012simpler, shamir2013stochastic}, for instance.  In \cite{AWBR_2009}, the authors proved that the optimal bound for the last iterate are $\mathcal{O}\left({1}/{\sqrt{k}} \right)$ (convex case) and $\mathcal{O}\left({1}/{k} \right)$ (strongly convex case).   Because the convergence rates are slow for SGD, improving the rate constant can make a difference: for the $1/k$ rate improving the constant by a factor of ten means the same error bound is achieved in one tenth the number of iterations.  In the rate constants which follow, $G^2$, is a bound for $\mathbb{E}[ \widehat g^2 ]$. 

 In the strongly convex case, we obtain the optimal $\mathcal{O}(1/k)$ rate for the last iterate, but improve the rate constant, be removing the dependance of the rate on the $L$-smoothness bound of the gradient.  (The constant, $L$, appears in the algorithm, to initialize the  learning rate, but not in the convergence rate).  Previous results,  \cite{nemirovski2009robust, shamir2013stochastic, jain2018accelerating}, have a rate which depends on $G$. See Table~\ref{fig: rates in expectation str convex}.
 
 In the convex case, the optimal rate for the last iterate of SGD (see \cite{shamir2013stochastic}) is order $\log(k)/\sqrt{k}$, with a rate constant that depends on $G^2$.  \cite{JainNN19} remove the log factor, by assuming that the number of iterations is decided in advance.   We obtain the $\mathcal{O}(\log(k)/\sqrt{k})$ rate for the last iterate, but again with removing the factor $G$ and the dependence on the $L$-smoothness.  See Table~\ref{fig: rates in expectation convex}. 	
 	
\begin{table*}[h!]
\caption{Convergence rate of $\mathbb{E}[f(x_k) -f^*]$ for a convex, $L$-smooth function.  $G^2$ is a bound on $\mathbb{E}[\widehat{g}^2 ]$, and $\sigma^2$ is a bound on the variance of $ \widehat{g}- \nabla f$. $h_k$ is the learning rate, $E_0$ is the initial value of the Lyapunov function. Top: $\mu$-strongly convex case, $h_k = \mathcal O (1/k)$. Bottom: convex case,   $D$ is the diameter of the domain,  $c$ is a free parameter.}
\label{fig: rates in expectation convex}
\label{fig: rates in expectation str convex}
\begin{center}
\begin{tabular}{|c|c|c|c|} 
\hline
 \cite{nemirovski2009robust}& \cite{shamir2013stochastic} &   \cite{JainNN19}  &  Acc. SGD (Prop~\ref{prop: decrease expectation E_k str convex acc}) \\ 
\hline 
 $ \dfrac{2C_f G^2}{\mu k }$ & $ \dfrac{17 G^2(1+ \log(k))}{\mu k }$ & $ \dfrac{130 G^2}{\mu k }$ &  $ \dfrac{4\sigma^2}{\mu k + \frac{4\sigma^2}{E_0^{SC}}}$ \\
\hline
\end{tabular}
\vspace{.2cm}
\begin{tabular}{|c|c|c|c|} 
\hline
 &\cite{shamir2013stochastic}&   Acc. SGD (Prop~\ref{prop: decrease expectation E_k convex acc}) \\ 
\hline 
$h_k$ & $\dfrac{c^2}{\sqrt{k}}$  &  $\dfrac{c}{ (k+1)^{3/4}}$\\  
\hline
 Rate &$\left(\dfrac{D^2}{c^2} + c^2 G^2 \right)\dfrac{(2+ \log(k))}{ \sqrt{k} }$
 & 
$ \dfrac{ \frac{E_0^C}{16c^2} + c^2 \sigma^2(1 +\log(k))}{\sqrt{k}}$
 \\
\hline
\end{tabular}
\end{center}

\end{table*}

\subsection{Nesterov's methods and Stochastic Gradient Descent}
Nesterov's method \cite{nesterov1983method} for accelerated gradient descent  can be written as 
\begin{align}
\label{genericN1}\tag{AGD1}
	x_{k+1} &= y_k - \frac 1 L \nabla f(y_k)\\
\label{genericN2}\tag{AGD2}
	y_{k+1} &= x_{k+1} + \beta_k(x_{k+1}-x_k)
\end{align}
 The general formulation of Nesterov's method involves a third variable, $v$, the special case above arises when that variable has been eliminated.   With the correct choice of parameters, this algorithm has the optimal convergence rate amongst first order (gradient-based) algorithms for convex optimization. Observe that taking $\beta = 0$ recovers standard gradient descent, and reduces the system to a single variable, $x$. 
The optimal parameters correspond to 
\begin{equation}
	\label{AGD_SC_params}
	\tag{$\beta$-SC}
%	\alpha = \frac 1 L,  \qquad
\beta = \beta_k = \frac{\sqrt{C_f} - 1}{\sqrt{C_f} + 1}.
\end{equation}
in the $\mu$-strongly convex case, and 
\begin{equation}
	\label{AGD_Convex_params}
		\tag{$\beta$-C}
%	\alpha = \frac 1 L,  \qquad
\beta_k = \frac{k}{k+3}
\end{equation}
in the convex case. 

The accelerated stochastic gradient descent methods {we introduce} can be written as generalizations of  \eqref{genericN1}-\eqref{genericN2}.   Popular heuristic methods for accelerated SGD consist of using the iteration \eqref{genericN1}\eqref{genericN2}, replacing $1/L$ with $h_k$, a decreasing learning rate, see for example~\cite{bengio2013advances} \cite{kingma2014adam}.   In practice, these algorithms converge faster than SGD.  We show that that decreasing learning rate versions of the ODEs which lead to Nesterov's method result in systems of the form
\begin{align}
\label{genericASGD1}\tag{ASGD1}
	x_{k+1} &= y_k - \frac{\alpha_k}{L} \nabla f(y_k)\\
\label{genericASGD2}\tag{ASGD2}
	y_{k+1} &= x_{k+1} + \beta_k(x_{k+1}-x_k) + \gamma_k (y_k - x_k)
\end{align}
% \red{typo? should it be $\nabla f(y_k)/L$ above to generalize from the full gradient case?}
where $\alpha_k,\beta_k$, and $\gamma_k$ are parameters.  The additional parameter, $\gamma$, does not appear in heuristic methods based on \eqref{genericN1}-\eqref{genericN2}. We establish improved convergence rates for this algorithm, using the coefficients provided, in the convex and strongly convex case.  The result can be interpreted as justification for the empirically observed results for heuristic momentum SGD, in the sense that a similar algorithm has a proven accelerated convergence rate. 

In the strongly convex case,  the coefficients are given, in  Proposition~\ref{prop: generalization Nesterov SC} below, by
\[
\alpha_k = \frac {\sqrt{C_f}}{ \sqrt{C_f} + k/2}, \qquad \beta_{k} = \frac{\sqrt{C_f} - 1}{\sqrt{C_f} + 1 + (k+1)/2 }, \qquad \gamma_k = \frac{k }{2\sqrt{C_f} +k + 3},
\]  
Notice that the algorithm generalizes~\eqref{AGD_SC_params} in the sense that when $k=0$, we recover~\eqref{AGD_SC_params}.  

For comparison,  write SGD as $x_{k+1} = x_k - \alpha_k \widehat g(x_k)/L$.  Then a typical learning rate schedule for SGD written is given by
\[
 \qquad  \alpha_k = \frac {C_f}{ k +C_f}
\]

In the convex case, Proposition \ref{prop: generealization Nesterov C} leads to the following coefficients
\[
\alpha_k =\sqrt{L} h_k,
	\qquad
	\beta_k = \frac{h_{k+1}}{t_{k+1}}(t_k \sqrt{L} -1)  %\frac{w_{k+1}}{w_{k}}\left(1 - w_k \right)
		\qquad
	\gamma_k = \frac{t_k}{t_{k+1}} \frac{h_{k+1}}{h_k}(1 - \sqrt{L}h_k) %\frac{w_{k+1}}{w_{k}} \left( 1 - h_k\sqrt{L} \right),
\] 
where $h_k = \frac{c}{(k+1)^{3/4}}$ and $t_k = \sum_{i=0}^k h_i$.  
%\red{check this} 
which also generalizes the convex coefficients \eqref{AGD_Convex_params}, when $h_k = \frac{1}{\sqrt{L}}$ and $t_k = h_k(k+2)$.
%although this is not immediately obvious. 

\subsection{Outline of our approach}    
%One goal of the continuous time approach is to take advantage of the Lyapunov function to obtain convergence rates for differential equations, with the hope that an explicit discretization of the differential equation leads to an algorithm which also decreases the Lyapunov function. 
~
%\red{reconsider this subsection, what is needed here?}

Stochastic gradient descent can be interpreted as a time discretization of the gradient descent ODE, $\dot x(t) = -\nabla f(x(t))$, where $\nabla f(x)$ is replaced, in the stochastic approximation setting, by perturbed gradients $\widehat g = \nabla f +e$. It is well-known that solutions of the ODE decrease a rate-generating Lyapunov function. In the perturbed case, an error term will appear in the dissipation of the Lyapunov function. This error can be handle choosing a specific schedule for the time step/learning rate, \cite{bottou2016optimization,Oberman_Prazeres}. 
%\red{ cite our AISTATS paper?}\\ 

In what follows, we study accelerated methods for stochastic gradient descent.  
The analysis we perform comes from writing the accelerated algorithm using the three variable formulation.  We present a derivation of Nesterov's method as a coupling between two gradient descent evolutions, where the first evolution has the usual learning rate, and where the second on has a larger rate. We extend this analysis to a system of ODEs for accelerated gradient descent, which also has an associated Lyapunov function.  The accelerated SGD algorithm can be interpreted as a discretization of the ODE system for Nesterov's method, but with a decreasing learning rate.

Thus the analysis of accelerated SGD can be seen as a generalization of the Liapunov function approach to gradient descent, but with the gradient descent ODE replaced by the accelerated gradient descent ODE.  For us, the advantage of the approach is that it allows us the reuse proofs available in the deterministic gradient setting.  Breaking down the steps in this manner is important, because the The Lyapunov analysis in the accelerated case is substantially more complex than in the gradient descent case.   For example, the proofs in~\cite{wilson2016lyapunov} are quite long, and not easy to follow.  However, our proof of \autoref{thm:Acc GD SC} in the stochastic gradient case is shorter than the corresponding result in \cite{wilson2016lyapunov}, yet it generalizes the result.

The key step, which we explain in the strongly convex case, is to obtain the following dissipation of a  rate-generating Lyapunov function $E_k$ after applying the algorithm for a single step, with learning rate $h_k$,  
\begin{equation}\label{inequ_main}
E_{k+1}
 \leq (1-\sqrt{\mu}h_k) E_{k+1}  + h_k\beta_k.
\end{equation}
Here, $h_k$ is the learning rate and the term  $\beta_k$ depends on the error $e_k$ from \eqref{perturbed_gradient}, which reduces to zero in the full gradient case $e =  0$. 
When we take expectations in \eqref{inequ_main}, $\mathbb{E}[\beta_k]$ is proportional to  $h_k \sigma^2$ (see Proposition \ref{thm:Acc GD SC})
% in the strongly convex case and Proposition \ref{prop: different gaps convex acc perturbed} in the convex case),
\[
\mathbb{E}[E_{k+1}]
 \leq (1-\sqrt{\mu}h_k) \mathbb{E}[E_{k}]  + A_k h_k^2 \sigma^2,
\]
where $A_k$ is a constant which may depend on $k$. Choosing a particular $h_k$ allows us to sum up the error terms and obtain the accelerated rate of convergence presented in Table~\ref{fig: rates in expectation str convex}.
We say the algorithm accelerates SGD in the sense that the term $\mu h_k$ in~\eqref{eq: bottou inequality} is replaced by $\sqrt \mu h_k$. 

%The inequality \eqref{inequ_main} can be seen as a generalization of the fundamental lemma in \cite{bottou2016optimization}, where the objective is replaced by a Lyapunov function. 

\cite{SLRB11} have done related work on accelerated SGD, but it applies in the case where 
 the magnitude of the errors decrease quickly enough to obtain the same rates as in the deterministic case.  A continuous time version of the analysis from \cite{SLRB11} was performed in \cite{attouch2016fast}. 

\subsection{Other related work}
 Continuous time analysis also appears in \cite{flammarion2015averaging}, \cite{lessard2016analysis}, and  \cite{NIPS2015_5843}, among many other recent works. The Lyapunov approach to proving convergence rates appears widely in optimization, for example, see \cite{beck2009fast} for FISTA. 

Convergence rates for averaged SGD are available in a wide setting, (\cite{bottou2016optimization}, \cite{lacoste2012simpler,Rakhkin2012,qian2019sgd}). In the non-asymptotic regime, the last iterate of SGD is often preferred in current applications.  The optimal convergence rate for SGD  is $O(1/k)$ in the smooth, strongly convex case, and $O(1/\sqrt{k})$ in the convex, nonsmooth case~(\cite{nemirovski2009robust}, \cite{bubeck2014optimization}).

When SGD is combined with momentum  (\cite{polyak1964some,nesterov2013introductory}) empirical performance is improved, but this improvement is not always theoretically established~(\cite{kidambi2018insufficiency}).  
 Accelerated versions of  stochastic gradient descent algorithms are comparatively more recent: they appear in \cite{lin2015universal} as well as in \cite{frostig2015regularizing} and \cite{jain2018accelerating}.  A direct acceleration method with a connection to Nesterov's method can be found in~\cite{allen2017katyusha}. \\

\paragraph{\textbf{Organization:}} 
We organize the paper as follows. In Section \ref{section: derivation Nest SC}, we derive our Accelerated Stochastic Gradient Descent and in Section \ref{section: accelerated str convex}, we study the rate convergence  of the last iterate of this scheme based on a Lyapunov analysis, in the strongly convex case. Then, in Sections \ref{section: accelerated convex} and \ref{section: derivation Nest C}, we study the convex case, first deriving a new algorithm and, then, studying the dissipation of a Lyapunov function. Finally, in the last Section \ref{section: simulations}, we present numerical simulations.

% Section \ref{section: abstract Liap} is devoted to the presentation of an abstract Lyapunov analysis. We 
%apply this analysis in the full gradient, and the stochastic gradient case.  In Section \ref{section:application} we apply the abstract analysis to the gradient descent ODE, in the case of convex and strongly convex functions.  
%
%Finally, in Sections \ref{section: accelerated convex} and \ref{section: accelerated str convex} we study the accelerated ODEs, and the related Lyapunov functions in the convex and strongly convex cases, respectively.   In the convex case we show that the discretization of \eqref{NODE} below  leads to Nesterov's method with a constant learning rate.  However setting the learning rate to  $h_k = O(k^{-3/4})$ in~\eqref{FEg},  (which is different from the usual order $k^{-1/2}$ rate) leads to an accelerated stochastic gradient descent algorithm. 
%In the strongly convex case, we begin with the ODE, \eqref{NODESC}, which is discretized with a learning rate $h_k$ to result in \eqref{FE-SC}.  In the full gradient case, setting the constant learning rate $h_k = 1/\sqrt L$ leads to Nesterov's method.  In the stochastic gradient case, setting $h_k = O(1/k)$ as described in Proposition~\ref{prop: decrease expectation E_k str convex acc},  leads to an accelerated SGD algorithm.\\  

\section{Derivation A-GD and A-SGD: Strongly convex case}
\label{section: derivation Nest SC}

\subsection{Derivation of accelerated algorithm}
In this section we present a motivating derivation of Nesterov's accelerated gradient descent in the strongly convex case.

The starting point of our analysis is to consider standard gradient descent for $\mu$-strongly convex, $L$-smooth  functions. 
\begin{equation}
\label{eq: Gradient Descent} \tag{GD}
x_{k+1} = x_k - \frac{1}{L}\nabla f(x_k).
\end{equation}
%For stability, we can take 
%\[
%\alpha = \frac 1 L
%\]
However, gradient descent  converges slowly when the condition number, $C_f$, is large. So we consider 
an auxiliary variable, $v$, which undergoes gradient descent with a multiplier~$\sqrt{C_f}$.
\begin{equation}
\label{eq: unstable Gradient Descent} \tag{U-GD}
v_{k+1} = v_k - \sqrt{C_f} \frac{1}{L} \nabla f(v_k).
\end{equation}
If $C_f > 1$, the $v$ equation becomes unstable.
% with the choice $\alpha = 1/L$ above. 
Couple the two equations, by introducing a forcing which pushes $x$ and $v$ together, with multiplier $w$.
%\begin{align}
%x_{k+1} - x_k &= w (v_k -x_k) - \alpha \nabla f(x_k) \\
%v_{k+1} - v_k &= w (x_k -v_k) - \sqrt C  \alpha \nabla f(v_k)
%\end{align}
Finally, to avoid the evaluation of the gradient at two different points, we reduce to a single gradient evaluation at $y_k$, given as a convex combination of $x_k$ and $v_k$, with weight $w$.   Together, these considerations lead to the following system involving the variables $x,v$ and $y$.  See Figure~\ref{figureAGD} for an illustration. 
\begin{align}
\label{ySC}
	y_k  & =   \left(1-w\right) x_k  + w v_k	\\
	\label{xprimewSC}
	x_{k+1} - x_k &= w (v_k - x_k) - \frac{1}{L} \nabla f(y_k)\\
	\label{vprimewSC}
    v_{k+1} - v_k &= w (x_k -v_k) -   \frac{\sqrt{C_f}}{L} \nabla f(y_k)
\end{align}
\begin{figure}
\begin{center}
\includegraphics[ scale=0.3]{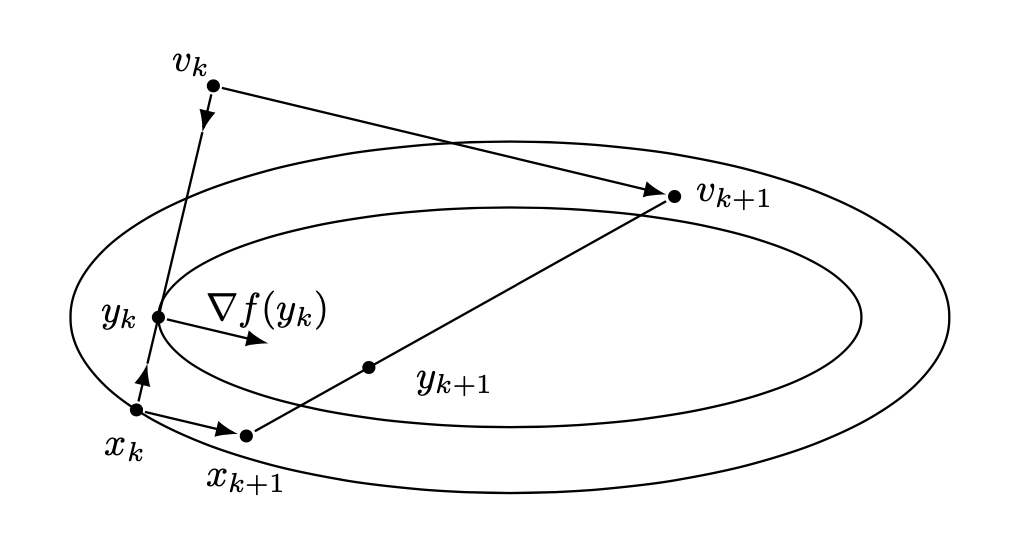}
\end{center}
\caption{Illustration of the role of the variables $x,v,y$ in the algorithm.}
\label{figureAGD}
\end{figure}
Next we show that the system obtained above corresponds to Nesterov's accelerated methods, in the strongly convex case, provided we make the optimal choice of $w$. 
\begin{proposition}
\label{prop: equivalence Nesterov SG}
In the system \eqref{ySC}, \eqref{xprimewSC} \eqref{vprimewSC},
set 
\begin{equation}\label{w_Nest}
	w =\frac{1}{1+\sqrt{C_f}}
\end{equation}
then we obtain Nesterov's accelerated gradient descent \eqref{genericN1} \eqref{genericN2} in the strongly convex case \eqref{AGD_SC_params}. 
\end{proposition}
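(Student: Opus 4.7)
The plan is to eliminate the auxiliary variable $v$ from the three-equation system \eqref{ySC}--\eqref{vprimewSC} and recognize the remaining two-variable recursion in $(x,y)$ as Nesterov's \eqref{genericN1}--\eqref{genericN2} with the strongly convex momentum \eqref{AGD_SC_params}, for the specific choice $w=1/(1+\sqrt{C_f})$. This is purely algebraic; there is no inequality or Lyapunov argument required at this stage.

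First, I would recover \eqref{genericN1} directly from \eqref{xprimewSC}. Rearranging \eqref{ySC} gives $w(v_k-x_k) = y_k-x_k$, so \eqref{xprimewSC} collapses to
\begin{equation}
x_{k+1} = x_k + (y_k-x_k) - \tfrac{1}{L}\nabla f(y_k) = y_k - \tfrac{1}{L}\nabla f(y_k),
\end{equation}
which is \eqref{genericN1}. Notably, this step does not even use the specific value of $w$; any $w$ works here.

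Next, I would derive \eqref{genericN2} by eliminating $v$ from \eqref{vprimewSC}. Solving \eqref{ySC} for $v$ yields $v_k = x_k + \tfrac{1}{w}(y_k - x_k)$ and an analogous formula at $k+1$. Substituting these into \eqref{vprimewSC}, using $w(x_k-v_k) = -(y_k-x_k)$, I obtain
\begin{equation}
\tfrac{1}{w}(y_{k+1}-x_{k+1}) = \bigl(\tfrac{1}{w}-1\bigr)(y_k-x_k) - (x_{k+1}-x_k) - \tfrac{\sqrt{C_f}}{L}\nabla f(y_k).
\end{equation}
Then I would use step one in the form $\tfrac{1}{L}\nabla f(y_k) = y_k - x_{k+1} = (y_k-x_k) - (x_{k+1}-x_k)$ to rewrite the last term, which gives
\begin{equation}
\tfrac{1}{w}(y_{k+1}-x_{k+1}) = \bigl(\tfrac{1}{w}-1-\sqrt{C_f}\bigr)(y_k-x_k) + (\sqrt{C_f}-1)(x_{k+1}-x_k).
\end{equation}

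The last step is where the specific choice $w=1/(1+\sqrt{C_f})$ becomes essential: it makes $\tfrac{1}{w}-1-\sqrt{C_f}=0$, killing the $(y_k-x_k)$ term, and produces
\begin{equation}
y_{k+1}-x_{k+1} = w(\sqrt{C_f}-1)(x_{k+1}-x_k) = \tfrac{\sqrt{C_f}-1}{\sqrt{C_f}+1}(x_{k+1}-x_k),
\end{equation}
which is exactly \eqref{genericN2} with $\beta$ given by \eqref{AGD_SC_params}. The only real obstacle is keeping the bookkeeping clean in the substitution of $v_k,v_{k+1}$ and the $\nabla f(y_k)$ elimination; once those are done, the choice of $w$ is forced by the requirement that the $(y_k-x_k)$ coefficient vanish, giving a conceptually satisfying reason why this particular $w$ is "optimal."
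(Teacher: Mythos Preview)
Your proof is correct and follows essentially the same approach as the paper: derive \eqref{genericN1} from \eqref{ySC}--\eqref{xprimewSC}, then eliminate $v$ from \eqref{vprimewSC} using \eqref{ySC} at steps $k$ and $k+1$ together with the gradient identity $\tfrac{1}{L}\nabla f(y_k)=y_k-x_{k+1}$, and finally simplify with the specific value of $w$. The only cosmetic difference is that you work directly in the differences $y_k-x_k$ and $x_{k+1}-x_k$, making the cancellation of the $(y_k-x_k)$ term explicit as the defining property of $w$, whereas the paper substitutes $v_k=\tfrac{1}{w}\bigl(y_k-(1-w)x_k\bigr)$ and arrives at the equivalent simplification $y_{k+1}=x_{k+1}+(1-2w)(x_{k+1}-x_k)$.
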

\begin{proof}
By definition of $y_k$ in \eqref{ySC} we obtain \eqref{genericN1} from \eqref{xprimewSC}.
\[
x_{k+1} = y_k - \frac 1 L  \nabla f(y_k)
\]
Notice that \eqref{w_Nest} leads to 
\begin{equation}\label{ww1}
	\frac{ 1- w}{w} = \sqrt{C_f}
\end{equation}
Next, collect the $v_k$ term in \eqref{vprimewSC} and   and eliminate the gradient term using \eqref{genericN1},  to obtain 
\begin{align*}
v_{k+1} &= (1-w)v_k + wx_k + \sqrt{C_f}(x_{k+1}-y_k)\\
&= 	(1-w)v_k + wx_k + \frac{ 1- w}{w}(x_{k+1}-y_k), && \text{ by \eqref{ww1}}
\end{align*}
Eliminate the variables $v_{k+1}$ and $v_k$ using \eqref{ySC} applied at $k$ and $k+1$, respectively, to obtain 
\begin{align*}
	\frac{1}{w} (y_{k+1} - (1-w)x_{k+1})
 = \frac{1-w}{w}(y_{k} - (1-w_k)x_{k}) + wx_k + \frac{ 1- w}{w} (x_{k+1}-y_k)  
\end{align*} 
Simplify, noticing that the $y_k$ terms cancel, to obtain
\[ 
y_{k+1} = x_{k+1} + (1-2w)(x_{k+1}-x_k) 
\]
Using  \eqref{w_Nest}, the definition of $w$, again leads to \eqref{genericN2} with $(1-2w)= \frac{\sqrt{C_f} - 1}{\sqrt{C_f} + 1}$ giving the value of $\beta$ in \eqref{AGD_SC_params} as desired. 
\end{proof}

\subsection{Extension to the stochastic case: variable learning rate} 
The system \eqref{ySC}, \eqref{xprimewSC} \eqref{vprimewSC}, decreases the Lyapunov function $E^{SC}$, see \eqref{eq:lyapunov SC discrete} below.   In order to generalize to the stochastic case, we first go to a continuous time formulation, then discretize time, to get the algorithm in the time dependent case.  This leads to an equation with the generalized form which follows.

We present the generalization without explanation.  The terms which appear can be justified as either: (i) discretization of the continuous time algorithm, choosing coefficients when $h\to 0$ such that $w/h \to \sqrt{\mu}$   or (ii) taking the Lyapunov function and going through the dissipation proof, choosing the coefficients which make the proof go through.    Alternatively, we could achieve the same results using different coefficients for the algorithms, by changing the coefficient of the quadratic term in the Lyapunov function. \\

Now we go back to the system 
 \eqref{ySC}, \eqref{xprimewSC} \eqref{vprimewSC} and generalize it 
 to allow $w$ to depend on $k$, and to have a learning rate $\alpha_k\leq 1$ 

This leads to the following extension of  \eqref{ySC}, \eqref{xprimewSC} \eqref{vprimewSC}, 
 \begin{align}
 \label{yck}
 	y_k  & =   \left(1-w_k\right) x_k  + w_k v_k	\\
	\label{xpk}
	x_{k+1} - x_k &= w_k (v_k - x_k) - {\alpha_k}\frac {\nabla f(y_k)}{{L}}\\
	\label{vpk}
    v_{k+1} - v_k &= w_k (x_k -v_k) - \sqrt{C_f} {\alpha_k}\frac {\nabla f(y_k)}{{L}}
\end{align}
The following definitions arise in the Lyapunov analysis which follows below. The definition of $w$, \eqref{wkdefnCC1} follows from the dissipation of the Lyapunov function, used to generate the basic inequality, and that the choice of \eqref{hk} will follow from the induction used to determine the rate in the scheduled learning rate case.
\begin{align}
\label{hk}
\alpha_k &= \frac {\sqrt{C_f}}{\sqrt{C_f} + k/2}\\		
\label{wkdefnCC1}
w_k &= \frac{\alpha_k  }{\sqrt {C_f} + \alpha_k}  = \frac 1 {\sqrt{C_f} + 1 + k/2},
\end{align}
Note that the middle expression \eqref{wkdefnCC1} extends \eqref{w_Nest}, since $\alpha_k \leq 1$.

\paragraph*{\textbf{Elimination $v$ variable:}} Here we show how the stochastic algorithm gives a generalization of AGD, by eliminating the $v$ variable, to write the equation in a more standard way. 

\begin{proposition}
\label{prop: generalization Nesterov SC}
The system \eqref{yck}\eqref{xpk}\eqref{vpk} with the choices  \eqref{hk} and \eqref{wkdefnCC1} is equivalent to 
\begin{align}\label{xeqncc}
		x_{k+1} &= y_k - {\alpha_k}  \nabla f(y_k)/L\\
%y_{k+1} &= x_{k+1} +   \frac{\sqrt{C_f} - 1}{\sqrt{C_f} + 1 + (k+1)/2 }(x_{k+1} - x_k) + \frac{k }{k + 3 + 2\sqrt{C}  }  (y_k - x_k)  \\
y_{k+1} &= x_{k+1} +   \beta_{k}(x_{k+1} - x_k) + \gamma_k  (y_k - x_k)  
\end{align}
along with 
\begin{equation}
	\label{ASGD_SC_Coeffs}
%\alpha_k = \frac {\sqrt{C_f}}{ \sqrt{C_f} + k/2},
%\qquad
\beta_{k} = \frac{\sqrt{C_f} - 1}{\sqrt{C_f} + 1 + (k+1)/2 },
\qquad
\gamma_k = \frac{k }{2\sqrt{C_f} +4 +k}
\end{equation}
which is an extension of \eqref{genericN1}\eqref{genericN2} with coefficients \eqref{AGD_SC_params} when $k = 0$.  
\end{proposition}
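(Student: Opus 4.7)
The strategy mirrors the proof of Proposition \ref{prop: equivalence Nesterov SG}, but now the coupling weight $w$ depends on $k$, so I expect extra bookkeeping when eliminating $v$. Schematically, the plan is: (i) recover the ``$x_{k+1} = y_k - \alpha_k \nabla f(y_k)/L$'' line from \eqref{yck}--\eqref{xpk}; (ii) use \eqref{vpk} to update $v$, replace the gradient by $L\alpha_k^{-1}(y_k-x_{k+1})$, then eliminate $v_k$ and $v_{k+1}$ in favour of $(x_k,y_k)$ and $(x_{k+1},y_{k+1})$ via \eqref{yck}; (iii) simplify using the explicit formulas for $w_k$, $w_{k+1}$, $\alpha_k$ to read off $\beta_k$ and $\gamma_k$.

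For step (i), plug $y_k = (1-w_k)x_k + w_k v_k$ into \eqref{xpk}: the $w_k(v_k - x_k)$ term combines with $x_k$ to give exactly $y_k$, so $x_{k+1} = y_k - (\alpha_k/L)\nabla f(y_k)$, which is \eqref{xeqncc}. In particular, $(\alpha_k/L)\nabla f(y_k) = y_k - x_{k+1}$, which I will use to kill the gradient in the $v$-update.

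For step (ii), the $v$-update \eqref{vpk} becomes
\begin{equation*}
v_{k+1} = (1-w_k) v_k + w_k x_k - \sqrt{C_f}\,(y_k - x_{k+1}).
\end{equation*}
From \eqref{yck}, $v_k = (y_k - (1-w_k)x_k)/w_k$ and $v_{k+1} = (y_{k+1} - (1-w_{k+1})x_{k+1})/w_{k+1}$. Substituting, multiplying through by $w_{k+1}$, and solving for $y_{k+1}$ yields an identity of the form $y_{k+1} = A\, x_{k+1} + B\, y_k + C\, x_k$, where
\begin{align*}
A &= (1-w_{k+1}) + w_{k+1}\sqrt{C_f}, & B &= w_{k+1}\!\left(\tfrac{1-w_k}{w_k} - \sqrt{C_f}\right), & C &= w_{k+1}\!\left(w_k - \tfrac{(1-w_k)^2}{w_k}\right).
\end{align*}
The algebraic identity $A+B+C=1$ (which should hold, as a sanity check, because the right-hand side of the desired formula for $y_{k+1}$ has coefficients summing to $1$) then lets me rewrite the expression in the $x_{k+1} + \beta_k(x_{k+1}-x_k)+\gamma_k(y_k-x_k)$ form with $\beta_k = A-1$ and $\gamma_k = B$.

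For step (iii), I use the second expression in \eqref{wkdefnCC1}: $w_k = 1/(\sqrt{C_f}+1+k/2)$, so $(1-w_k)/w_k = \sqrt{C_f} + k/2$ and $w_{k+1} = 2/(2\sqrt{C_f}+k+3)$. Then $\beta_k = w_{k+1}(\sqrt{C_f}-1)$ reduces at once to $(\sqrt{C_f}-1)/(\sqrt{C_f}+1+(k+1)/2)$, as claimed, and $\gamma_k = w_{k+1}\cdot k/2 = k/(2\sqrt{C_f}+k+3)$. Finally, specialising to $k=0$ gives $\alpha_0 = 1$, $\gamma_0 = 0$, and $\beta_0 = (\sqrt{C_f}-1)/(\sqrt{C_f}+1)\cdot \frac{1}{1+\tfrac{1}{2(\sqrt{C_f}+1)}}$; the last factor equals $1$ only in the limit, so to cleanly recover \eqref{AGD_SC_params} at $k=0$ I will check that the derivation really does collapse to the computation in Proposition \ref{prop: equivalence Nesterov SG} (where the $k/2$ shift was absent).

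The only obstacle I anticipate is purely bookkeeping: the $v_k$ and $v_{k+1}$ eliminations produce $w_k$ in the denominator of one term and $w_{k+1}$ in the numerator of another, so making the algebra line up to expose the clean $A+B+C=1$ structure requires care. I do not foresee any conceptual issue beyond this.
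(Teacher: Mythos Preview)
Your plan is essentially the paper's proof: obtain \eqref{xeqncc} from \eqref{yck}--\eqref{xpk}, use it to eliminate $\nabla f(y_k)$ in \eqref{vpk}, substitute for $v_k,v_{k+1}$ via \eqref{yck}, and simplify with \eqref{wkdefnCC1}; the paper arrives at $y_{k+1}=x_{k+1}+w_{k+1}\bigl[(\sqrt{C_f}-1)(x_{k+1}-x_k)+\tfrac{k}{2}(y_k-x_k)\bigr]$, which is exactly your $\beta_k=w_{k+1}(\sqrt{C_f}-1)$ and $\gamma_k=w_{k+1}\,k/2$. Two side remarks: your value $\gamma_k=k/(2\sqrt{C_f}+k+3)$ agrees with the paper's own derivation and with the formula in the introduction, so the ``$+4$'' in \eqref{ASGD_SC_Coeffs} is a typo; and your hesitation about $\beta_0$ is well founded---the $y$-recursion at $k=0$ involves $w_1$, not $w_0$, so the literal reduction to \eqref{AGD_SC_params} holds for the three-variable system \eqref{yck}--\eqref{vpk} rather than for the eliminated form.
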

Note also the new $y_k-x_k$ term which appears. 
\begin{proof}
As in in the constant learning rate case, Prop~\ref{prop: equivalence Nesterov SG}, we obtain~\eqref{xeqncc}.
Next, collect the $v_k$ term in \eqref{vpk} and   and eliminate the gradient term using \eqref{genericN1},  to obtain 
\begin{align*}
v_{k+1} &= (1-w_k)v_k + w_kx_k + \sqrt{C_f}(x_{k+1}-y_k)
\end{align*}
eliminate the variable $v_{k+1}$ and $v_k$  using  using \eqref{yck} applied at $k$ and $k+1$, to obtain 
\begin{align*}
	\frac{1}{w_{k+1}} (y_{k+1} - (1-w_{k+1})x_{k+1})
 = \frac{1-w_k}{w_k}(y_{k} - (1-w_k)x_{k}) + wx_k + \sqrt{C_f}(x_{k+1}-y_k)  
\end{align*} 
which simplifies to 
\[ 
y_{k+1} = x_{k+1} +  w_{k+1} 
\left [ (1/w_k - 1 - \sqrt{C_f})y_k + (\sqrt{C_f}- 1)x_{k+1} + (2-1/w_k)  x_k   \right ]
\]

%Using \eqref{hk} in \eqref{wkdefnCC1} we have 
%\begin{equation}
%\label{wk}
%	w_k = \frac 1 {\sqrt{C_f} + 1 + k/2},
%	\qquad 
%	\frac{1-w_k}{w_k} = \sqrt{C_f} + k/2
%\end{equation}
Simplify, using \eqref{wkdefnCC1}, to obtain 
%\[ 
%y_{k+1} = x_{k+1} +  w_{k+1} \left [ (k/2)y_k + (\sqrt C - 1)x_{k+1} + (2-1/w_k) x_k   \right ]
%\]
%use \eqref{wk} to write the coefficient of $x_k$ as  $-(\sqrt C -1 + k/2)$ 
\[ 
y_{k+1} = x_{k+1} +  w_{k+1} \left [ (\sqrt{C_f} - 1)(x_{k+1} - x_k) + \frac k 2 (y_k - x_k)   \right ]
\]
finally, use \eqref{wkdefnCC1} at $k+1$ and simplify to obtain
the final equation. 
\end{proof}

\subsection{Transition to continuous time}

The system \eqref{yck}\eqref{xpk}\eqref{vpk} with the choice \eqref{wkdefnCC1} can be seen as the forward Euler discretization of the system of ODEs, 
 \begin{equation}\label{NODESC}\tag{1st-ODE-SC}
\begin{cases}
		\dot{x} = \sqrt{\mu} (v-x) -\nabla f(x)/{\sqrt{L}},\\
\dot{v} =\sqrt{\mu} (x-v) -\sqrt{C_f} \nabla f(x)/{\sqrt{L}},\\	\end{cases}
\end{equation}
This system is equivalent to the second order equation with Hessian damping for a smooth~$f$
\begin{equation}\label{OurEqnSCC}\tag{H-ODE-SC}
	\ddot{x} + 2\sqrt{\mu}\dot{x} + \nabla f(x)  = - \frac{1}{\sqrt{L}} \left( D^2 f(x) \cdot \dot{x} + \sqrt{\mu}\nabla f(x) \right).
\end{equation} 
The equation \eqref{OurEqnSCC} can be seen as a combination between Polyak's ODE, \cite{polyak1964some}, 
\begin{equation}\label{polyak_ode}\tag{A-ODE-SC}
	\ddot{x} + 2\sqrt{\mu}\dot{x} + \nabla f(x)  = 0.
\end{equation}
 and the ODE for continuous Newton's method.

Notice that \eqref{OurEqnSCC} is a perturbation of \eqref{polyak_ode} of order $\frac{1}{\sqrt{L}}$, and the perturbation goes to zero as $L\to\infty$.  Similar ODEs have been studied by \cite{alvarez2002second}, they have been shown to accelerate gradient descent in continuous time in \cite{attouch2016fast}.  

There is more than one ODE which can be discretized to obtain Nesterov's method. 
\citet{shi2018understanding} introduced a family of high resolution second order ODEs which also lead to Nesterov's method. In this context, \eqref{OurEqnSCC} corresponds to the high-resolution equation with the parameter $\frac{1}{\sqrt{L}}$.  Making the specific choice of first order system \eqref{NODESC} considerably simplifies the analysis, allowing for shorter, clearer proofs which generalize to the stochastic gradient case (which was not treated in \cite{shi2018understanding}).  \\

In the next section, we are going to generalize the Lyapunov analysis for \eqref{polyak_ode} to the stochastic case using \eqref{NODESC}.

\section{Accelerated method: Strongly Convex case}
\label{section: accelerated str convex}  

In this section, we use a Lyapunov analysis to study the rate of convergence of the generalisation of Nesterov algorithm, \eqref{xeqncc}-\eqref{ASGD_SC_Coeffs} where $\nabla f$ is replaced by the stochastic gradient $\widehat{g}$, \eqref{perturbed_gradient}-\eqref{ass: mean and varience error}. First, we study the dissipation of the Lyapunov function along \eqref{yck}\eqref{xpk}\eqref{vpk} with \eqref{wkdefnCC1} during one step. Then, we highlight two regimes:
\begin{itemize}
\item if the initial value of the Lyapunov function is bigger than a critical value $E_{crit}$, using a contant learning rate, we obtain an accelerated exponential rate to a neighborhood of the minimum,
\item and once $E_{crit}$ is reached, if $h_k$ satisfies \eqref{hk}, then $\mathbb{E}[f(x_k) -f^*]$ converges at the optimal rate $\frac{1}{k}$ with an accelerated rate constant independent of $L$. 
\end{itemize}
% This section is devoted to the analysis of \eqref{FE-SC} in the case where the gradient $\nabla f$ is replaced by its stochastic approximation $g = \nabla f +e$.

\subsection{Dissipation of the Lyapunov function}
\begin{definition}
Define the continuous time Lyapunov function,
\begin{equation}
\label{eq:lyapunov SC discrete}
%\label{eq:Lyapunov function str convex acc}
	E^{SC}(x,v)= f(x) - f^* + \frac{\mu}{2} \|v -x^*\|^2,
\end{equation}
and write $ E_k^{SC} = E^{SC}(x_k,v_k)$.
%\begin{equation}
% E_k^{SC} = E^{SC}(x_k,v_k) % =  f(x_k) - f^* + \frac{\mu}{2} |v_k-x^*|^2.
%\end{equation}
\end{definition}
Given initial data, $x_0$, we make the convention that $v_0 = x_0$, so that 
\begin{equation}\label{E0_init}
	E_0^{SC} = f(x_0)-f^* +\frac{\mu}{2} |x_0 -x^*|^2  \leq 2(f(x_0)-f^*)
\end{equation}
Replacing gradients $\nabla f$ by $\widehat{g}$ in \eqref{xpk}-\eqref{vpk}
and writing  $h_k = \alpha_k \sqrt{L}$ ,
we obtain
\begin{align}
	\label{Sto_FE-SC-x}
	x_{k+1} - x_k &= w_k (v_k - x_k) - \frac{h_k}{\sqrt{L}}( \nabla f(y_k) +e_k),\\
	\label{Sto_FE-SC-v}	
v_{k+1} - v_k &= w_k (x_k - v_k)-\frac{h_k  }{\sqrt{\mu}}(\nabla f(y_k)	+e_k),	
\end{align}
%\begin{equation}
%\label{Sto_FE-SC}\tag{Per-FE-SC}
%\left\{ \begin{array}{l}
%	x_{k+1} - x_k = w_k (v_k - x_k) - \frac{h_k}{\sqrt{L}}( \nabla f(y_k) +e_k),\\
%v_{k+1} - v_k = w_k (x_k - v_k)-\frac{h_k  }{\sqrt{\mu}}(\nabla f(y_k)	+e_k),	
%\end{array}\right.
%\end{equation}
We set 
\begin{equation}\label{wk_stock}
w_k= \frac{h_k\sqrt{\mu}}{1+h_k\sqrt{\mu}}.
\end{equation}
so that
\begin{equation}\label{wk_inv}
\frac{w_k}{1-w_k}= h_k\sqrt{\mu}
\end{equation}
which is an extension of \eqref{ww1}.
% \eqref{yck}--\eqref{wkdefnCC1},  becomes
In the next proposition, we establish the dissipation of $E^{SC}$ along \eqref{Sto_FE-SC-x},\eqref{Sto_FE-SC-v}, \eqref{yck} \eqref{wk_stock}. In particular, it proves that in the full gradient case ($e_k = 0$), $E^{SC}$ is, indeed, a Lyapunov function for \eqref{NODESC}. This proposition is a generalization of a result in \cite{wilson2016lyapunov}, for example. 
\begin{proposition}
\label{thm:Acc GD SC}
Let $x_k, v_k$ be two sequences generated by \eqref{Sto_FE-SC-x},\eqref{Sto_FE-SC-v}, \eqref{yck} \eqref{wk_stock} with initial condition $(x_0,v_0)$. Suppose that $h_k\leq \frac{1}{\sqrt{L}}$, then
\begin{equation}
\label{eq: dissipation lip sto str convex}
{E}_{k+1}^{SC} \leq (1- h_k\sqrt{\mu}) {E}_k^{SC} + h_k\beta_k ,
\end{equation}
where 
\[\beta_k =  2h_k \left\langle \nabla f(y_k) + \frac{e_k}{2} ,e_{k} \right\rangle -\left\langle \sqrt{\mu}(x_k - y_k + v_k -x^*) -\frac{1}{\sqrt{L}}\nabla f(y_k),e_k\right\rangle.\]
In particular,
\begin{equation}
\label{eq: main inequality expectation SC}
\mathbb{E}[E_{k+1}^{SC}] \leq (1- h_k\sqrt{\mu}) \mathbb{E}[{E}_k^{SC}] + h_k \sigma^2.
\end{equation}
\end{proposition}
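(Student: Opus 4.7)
The plan is to bound $E_{k+1}^{SC} - (1-h_k\sqrt{\mu})E_k^{SC}$ termwise, letting the tailored choice $w_k = h_k\sqrt{\mu}/(1+h_k\sqrt{\mu})$ orchestrate the needed cancellations. I split the difference into an $f$-part, $[f(x_{k+1}) - (1-h_k\sqrt{\mu})f(x_k) - h_k\sqrt{\mu}f^*]$, and a quadratic part, $(\mu/2)[\|v_{k+1}-x^*\|^2 - (1-h_k\sqrt{\mu})\|v_k - x^*\|^2]$, handling each separately before combining.

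For the $f$-part I use $y_k$ as a pivot:
\[ f(x_{k+1}) - (1-h_k\sqrt{\mu})f(x_k) - h_k\sqrt{\mu}f^* = [f(x_{k+1}) - f(y_k)] + (1-h_k\sqrt{\mu})[f(y_k) - f(x_k)] + h_k\sqrt{\mu}[f(y_k) - f^*]. \]
I bound the first bracket by $L$-smoothness at $y_k$ (legal because $x_{k+1} = y_k - (h_k/\sqrt{L})\widehat{g}_k$ with $h_k\leq 1/\sqrt{L}$) and the other two by $\mu$-strong convexity at $y_k$ against $x_k$ and $x^*$ respectively. All three gradient--displacement inner products then collapse through the telescoping identity
\[ (x_{k+1}-y_k) + (1-h_k\sqrt{\mu})(y_k - x_k) + h_k\sqrt{\mu}(y_k - x^*) = (x_{k+1} - x_k) + h_k\sqrt{\mu}(x_k - x^*), \]
leaving the quadratic penalties $-\frac{\mu}{2}\|y_k - x_k\|^2$, $-\frac{\mu}{2}\|y_k - x^*\|^2$ from strong convexity and the smoothness term $(L/2)\|x_{k+1} - y_k\|^2 = (h_k^2/2)\|\widehat{g}_k\|^2$.

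For the quadratic part I substitute $v_{k+1} - x^* = q_k - (h_k/\sqrt{\mu})\widehat{g}_k$ with $q_k := (1-w_k)(v_k - x^*) + w_k(x_k - x^*)$, square out, and apply $\|q_k\|^2 = (1-w_k)\|v_k-x^*\|^2 + w_k\|x_k - x^*\|^2 - w_k(1-w_k)\|v_k - x_k\|^2$, and expand $\|y_k - x^*\|^2$ and $\|y_k - x_k\|^2$ analogously. The key algebraic input is $w_k(1+h_k\sqrt{\mu}) = h_k\sqrt{\mu}$, equivalent to the definition of $w_k$: it makes the combined coefficients of $\|v_k - x^*\|^2$ and of $\|x_k - x^*\|^2$ each vanish, while the $\|v_k - x_k\|^2$ coefficient collapses to the nonpositive $-\frac{\mu}{2}(1-h_k\sqrt{\mu})w_k$. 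The same identity also forces the deterministic parts of the two first-order contributions, $\nabla f(y_k)\cdot[(x_{k+1}-x_k) + h_k\sqrt{\mu}(x_k-x^*)]$ from the $f$-side and $-h_k\sqrt{\mu}\langle\nabla f(y_k), q_k\rangle$ from the $v$-side, to cancel identically. The quadratic-in-$\widehat{g}_k$ terms add to $h_k^2\|\widehat{g}_k\|^2$; combined with the solitary $-(h_k/\sqrt{L})\langle\nabla f(y_k),\widehat{g}_k\rangle$ they produce $h_k(h_k - 1/\sqrt{L})\|\nabla f(y_k)\|^2\leq 0$ plus cross terms in $e_k$. What remains after dropping nonpositive residues is exactly $h_k\beta_k$: terms linear in $e_k$ with $\mathcal{F}_k$-measurable coefficients built from $\nabla f(y_k)$, $q_k$, and constants in $h_k, L, \sqrt{\mu}$, together with a single $\|e_k\|^2$ piece. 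In particular $\beta_k\equiv 0$ in the full-gradient case, recovering the deterministic Lyapunov inequality as a special case.

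For the expectation bound, conditioning on the history $\mathcal{F}_k$ kills every linear-in-$e_k$ term in $\beta_k$ via $\mathbb{E}[e_k\mid\mathcal{F}_k]=0$, and bounds the unique quadratic piece by $h_k\mathbb{E}[\|e_k\|^2\mid\mathcal{F}_k]\leq h_k\sigma^2$; the extra $h_k$ in front of $\beta_k$ together with $h_k\leq 1/\sqrt{L}\leq 1$ then delivers the stated $\mathbb{E}[h_k\beta_k]\leq h_k\sigma^2$. The main obstacle is bookkeeping: the cancellations are exact only because of the precise form of $w_k$, so I would carry the abbreviation $s := h_k\sqrt{\mu}$ symbolically and invoke $w_k(1+s) = s$ only at the moments of maximum payoff, rather than brute-force substituting the explicit formula for $w_k$ everywhere.
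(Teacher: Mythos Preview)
Your argument is correct and reaches the same conclusion, but the decomposition differs from the paper's. The paper bounds $E_{k+1}^{SC}-E_k^{SC}$ and lets the contraction term $-h_k\sqrt{\mu}E_k^{SC}$ emerge from the quadratic lemma: there, strong convexity is applied to $\langle\nabla f(y_k),y_k-x^*\rangle$ to produce $f(y_k)-f^*$, and then an additional $L$-smoothness step is used to trade $f(y_k)$ for $f(x_k)$. You instead bound $E_{k+1}^{SC}-(1-h_k\sqrt{\mu})E_k^{SC}$ directly, apply strong convexity twice on the $f$-side (against $x_k$ and against $x^*$), and keep the $v$-side purely algebraic via the convex-combination identity $\|(1-w)a+wb\|^2=(1-w)\|a\|^2+w\|b\|^2-w(1-w)\|a-b\|^2$. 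This is cleaner in that no smoothness or convexity is invoked in the quadratic analysis, and the role of $w_k(1+h_k\sqrt{\mu})=h_k\sqrt{\mu}$ as the single cancellation mechanism is more transparent. Both routes rest on the same identity and yield the same residual structure.

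Two small remarks. First, your computation actually produces $-\frac{1}{\sqrt{L}}\langle\nabla f(y_k),e_k\rangle$ in the residue, i.e.\ the sign opposite to the one appearing in the stated $\beta_k$; the paper's own lemmas also give that sign, so this is a typo in the displayed $\beta_k$ and is harmless for the expectation bound. Second, your justification of $\mathbb{E}[h_k\beta_k]\le h_k\sigma^2$ via $h_k\le 1/\sqrt{L}\le 1$ tacitly assumes $L\ge1$; in fact both arguments deliver the sharper $\mathbb{E}[h_k\beta_k]\le h_k^2\sigma^2$, which is what the paper actually uses downstream.
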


In order to simplify the algebra, we will split the $E^{SC}$ term into $f$ and the quadratic term and prove lemmas on the dissipation of each term.
 Moreover, note that all the terms involving $e_k$ are carried along to obtain the main result.  So this result extends the full gradient without additional estimates on the $e_k$.  The final step will be to take expectations to yield the main inequality \eqref{eq: main inequality expectation SC}.

\begin{lemma}[Dissipation of $f$]
\label{le: dissipation f SC}
Suppose that $f$ is a $\mu$-strongly convex and $L$-smooth function, then
\begin{align*}
f(x_{k+1}) -f(x_k) \leq & \langle \nabla f(y_k) , y_k -x_k \rangle - \frac{\mu}{2}|y_k -x_k|^2 + \left( \frac{h_k^2}{2} - \frac{h_k}{\sqrt{L}}\right) |\nabla f(y_k)|^2\\
& - \frac{h_k}{\sqrt{L}} \langle \nabla f(y_k) , e_k \rangle + h_k^2 \left\langle \nabla f(y_k) +\frac{e_k}{2}, e_k \right\rangle.
\end{align*}
\end{lemma}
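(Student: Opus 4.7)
The plan is to split $f(x_{k+1}) - f(x_k)$ as $[f(x_{k+1}) - f(y_k)] + [f(y_k) - f(x_k)]$ and bound each piece using the smoothness and strong convexity hypotheses, with the stochastic update rewritten in a convenient form.

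First I would observe that, by the definition \eqref{yck} of $y_k$, we have $y_k - x_k = w_k(v_k - x_k)$, so the $x$-update \eqref{Sto_FE-SC-x} simplifies to
\begin{equation*}
x_{k+1} - y_k = -\frac{h_k}{\sqrt{L}}\bigl(\nabla f(y_k) + e_k\bigr).
\end{equation*}
This is the one algebraic identity that feeds everything else.

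Next I would apply $L$-smoothness \eqref{Basic_Lsmooth} at $y_k$ to control $f(x_{k+1}) - f(y_k)$ by $\langle \nabla f(y_k), x_{k+1} - y_k\rangle + \tfrac{L}{2}|x_{k+1}-y_k|^2$. Substituting the identity above, the inner product term becomes $-\tfrac{h_k}{\sqrt{L}}|\nabla f(y_k)|^2 - \tfrac{h_k}{\sqrt{L}}\langle \nabla f(y_k), e_k\rangle$, while $\tfrac{L}{2}|x_{k+1}-y_k|^2 = \tfrac{h_k^2}{2}|\nabla f(y_k) + e_k|^2$. Expanding this square as $\tfrac{h_k^2}{2}|\nabla f(y_k)|^2 + h_k^2\langle \nabla f(y_k), e_k\rangle + \tfrac{h_k^2}{2}|e_k|^2$ and regrouping $h_k^2\langle \nabla f(y_k), e_k\rangle + \tfrac{h_k^2}{2}|e_k|^2 = h_k^2\langle \nabla f(y_k) + \tfrac{e_k}{2}, e_k\rangle$ produces exactly the three ``error'' terms that appear in the statement, plus the $|\nabla f(y_k)|^2$ coefficient $\bigl(\tfrac{h_k^2}{2} - \tfrac{h_k}{\sqrt{L}}\bigr)$.

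Then I would apply $\mu$-strong convexity \eqref{f_mu_strongly_convex_gen} at $y_k$ with test point $x_k$, which gives $f(y_k) - f(x_k) \leq \langle \nabla f(y_k), y_k - x_k\rangle - \tfrac{\mu}{2}|y_k - x_k|^2$. Adding the two inequalities yields the claim directly; no further simplification is needed. The step that requires the most care is bookkeeping the cross terms in the expansion of $|\nabla f(y_k) + e_k|^2$, so that the coefficient of $|\nabla f(y_k)|^2$ collapses to $\tfrac{h_k^2}{2} - \tfrac{h_k}{\sqrt{L}}$ and the remaining error terms reassemble into the precise form stated; this is the only place where an accounting slip could change the constants. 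Note that the hypothesis $h_k \leq 1/\sqrt{L}$ is not needed to derive the inequality itself, only to guarantee that this coefficient is nonpositive, which will matter in the next step when Lemma~\ref{le: dissipation f SC} is combined with the quadratic-part dissipation to prove Proposition~\ref{thm:Acc GD SC}.
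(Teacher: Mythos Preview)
Your proof is correct and follows essentially the same approach as the paper: split $f(x_{k+1})-f(x_k)$ through $f(y_k)$, apply $L$-smoothness and $\mu$-strong convexity respectively, use \eqref{yck} to rewrite the $x$-update as $x_{k+1}-y_k=-\tfrac{h_k}{\sqrt{L}}(\nabla f(y_k)+e_k)$, and expand the square. Your bookkeeping of the error terms and your closing remark about the role of the hypothesis $h_k\le 1/\sqrt{L}$ are both accurate.
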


\begin{proof}
By adding and subtracting $f(y_k)$, we obtain
\begin{align}
\label{eq: dissip f SC}
f(x_{k+1}) - f(x_k) = & f(x_{k+1})- f(y_k) + f(y_k) - f(x_k)\nonumber\\
\leq & \langle \nabla f(y_k) , x_{k+1} -y_k \rangle + \frac{L}{2}| x_{k+1} -y_k |^2 && \text{by $L$-smoothness of $f$}\\
 & + \langle \nabla f(y_k) , y_{k} -x_k \rangle - \frac{\mu}{2}| y_k -x_k |^2 && \text{by $\mu$-convexity of $f$}\nonumber.
\end{align}

As before, using \eqref{yck},  \eqref{Sto_FE-SC-x} can be rewritten as 
$$ x_{k+1} = y_k - \frac{h_k}{\sqrt{L}}( \nabla f(y_k) +e_k).$$
Then, plugging it into the first line in the right hand side of \eqref{eq: dissip f SC} and expanding the square, we get
\begin{align*}
 \langle \nabla f(y_k) , x_{k+1} -y_k \rangle + \frac{L}{2}| x_{k+1} -y_k |^2 = & - \frac{h_k}{\sqrt{L}} \langle \nabla f(y_k) , \nabla f(y_k) +e_k\rangle + \frac{h_k^2}{2}| \nabla f(y_k) +e_k|^2\\
 = & h_k \left( \frac{h_k}{2} - \frac{1}{\sqrt{L}} \right) |\nabla f(y_k) |^2 \\
 & - \frac{h_k}{\sqrt{L}} \langle \nabla f(y_k) , e_k\rangle + h_k^2 \langle  \nabla f(y_k) +\frac{e_k}{2} , e_k\rangle && \text{terms in $e_k$}
\end{align*}

To conclude the proof, use this expression in \eqref{eq: dissip f SC}. 
\end{proof}

\begin{lemma}[Dissipation of the quadratic term]
\label{le: dissipation quadratic SC}
Let $x_k,v_k,y_k$ be defined by~\eqref{Sto_FE-SC-x},\eqref{Sto_FE-SC-v}, \eqref{yck} \eqref{wk_stock} then
\begin{align*}
\frac{\mu}{2}|v_{k+1} - x^* |^2 - \frac{\mu}{2}|v_{k} - x^* |^2  \leq &-h_k\sqrt{\mu}E^{SC}_k  -\langle \nabla f(y_k) , y_k -x_k \rangle\\
& + \left( \frac{\mu}{2}+ \frac{h_k\sqrt{\mu}L}{2} - \frac{\sqrt{\mu}}{2h_k} \right)|x_k - y_k|^2 + \frac{h_k^2}{2}|\nabla f(y_k)|^2\\
&- h_k\sqrt{\mu} \langle v_k -x^* +x_k -y_k, e_k \rangle + h_k^2 \left\langle \nabla f(y_k) +\frac{e_k}{2}, e_k \right\rangle.
\end{align*}
\end{lemma}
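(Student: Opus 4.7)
The plan is to start from the identity
\begin{equation*}
\frac{\mu}{2}|v_{k+1}-x^*|^2 - \frac{\mu}{2}|v_k-x^*|^2 = \mu\langle v_k-x^*, v_{k+1}-v_k\rangle + \frac{\mu}{2}|v_{k+1}-v_k|^2,
\end{equation*}
and substitute the update from \eqref{Sto_FE-SC-v}, rewriting $w_k(x_k-v_k)=x_k-y_k$ via \eqref{yck} so that $v_{k+1}-v_k = (x_k-y_k) - \frac{h_k}{\sqrt{\mu}}(\nabla f(y_k)+e_k)$. Expanding, the $e_k$-cross contributions $-h_k\sqrt{\mu}\langle v_k-x^*,e_k\rangle$ and $-h_k\sqrt{\mu}\langle x_k-y_k,e_k\rangle$ combine into $-h_k\sqrt{\mu}\langle v_k-x^*+x_k-y_k,e_k\rangle$, while $\frac{h_k^2}{2}|\nabla f(y_k)+e_k|^2$ splits as $\frac{h_k^2}{2}|\nabla f(y_k)|^2 + h_k^2\langle \nabla f(y_k)+e_k/2,e_k\rangle$, together matching the error block in the lemma.

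For the deterministic part, the crucial identity from \eqref{yck} and \eqref{wk_inv} is $v_k-y_k = (y_k-x_k)/(h_k\sqrt{\mu})$. Splitting $v_k-x^* = (v_k-y_k)+(y_k-x^*)$ in $-h_k\sqrt{\mu}\langle v_k-x^*, \nabla f(y_k)\rangle$ makes the $(v_k-y_k)$ piece collapse to $-\langle \nabla f(y_k), y_k-x_k\rangle$. On the remaining $-h_k\sqrt{\mu}\langle y_k-x^*, \nabla f(y_k)\rangle$ I apply $\mu$-strong convexity at $(y_k, x^*)$, producing $-h_k\sqrt{\mu}(f(y_k)-f^*) - \frac{h_k\mu\sqrt{\mu}}{2}|y_k-x^*|^2$, and then $L$-smoothness at $(y_k,x_k)$ in the form $f(x_k)\le f(y_k)+\langle\nabla f(y_k),x_k-y_k\rangle+\frac{L}{2}|x_k-y_k|^2$, which converts this into $-h_k\sqrt{\mu}(f(x_k)-f^*) - h_k\sqrt{\mu}\langle \nabla f(y_k), y_k-x_k\rangle + \frac{h_k\sqrt{\mu}L}{2}|x_k-y_k|^2$; this is where the $L$-dependent coefficient of the lemma is produced. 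After combining with the explicit $h_k\sqrt{\mu}\langle \nabla f(y_k), y_k-x_k\rangle$ coming from expanding $\frac{\mu}{2}|v_{k+1}-v_k|^2$, the three gradient cross terms cancel down to exactly $-\langle \nabla f(y_k), y_k-x_k\rangle$.

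The last step handles the geometric term $\mu\langle v_k-x^*, x_k-y_k\rangle$ via polarization as $\frac{\mu}{2}(|x_k-x^*|^2 - |v_k-x_k|^2 - |y_k-x^*|^2 + |v_k-y_k|^2)$, together with $|v_k-x_k|^2 = |y_k-x_k|^2/w_k^2$ and $|v_k-y_k|^2 = |y_k-x_k|^2/(h_k^2\mu)$. To produce the $-\frac{h_k\mu\sqrt{\mu}}{2}|v_k-x^*|^2$ of the lemma, I use the convex-combination identity $|y_k-x^*|^2 = (1-w_k)|x_k-x^*|^2 + w_k|v_k-x^*|^2 - |y_k-x_k|^2/(h_k\sqrt{\mu})$ together with $\frac{\mu}{2}|x_k-x^*|^2 \leq f(x_k)-f^*$, exploiting the clean cancellations $(1+h_k\sqrt{\mu})(1-w_k)=1$ and $(1+h_k\sqrt{\mu})w_k=h_k\sqrt{\mu}$. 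The main obstacle will be the bookkeeping: several $|x_k-y_k|^2$ pieces (from polarization, from the square expansion, from the $L$-smoothness step, and from the convex-combination identity for $|y_k-x^*|^2$) must combine exactly into the stated coefficient $\frac{\mu}{2}+\frac{h_k\sqrt{\mu}L}{2}-\frac{\sqrt{\mu}}{2h_k}$, and the $|y_k-x^*|^2$ terms appearing with different signs in polarization and in strong convexity must be eliminated precisely by the convex-combination identity.
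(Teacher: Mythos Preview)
Your plan is correct and will yield the stated bound, but it follows a somewhat different route from the paper and carries one superfluous ingredient.

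\textbf{Comparison with the paper.} The paper does not polarize $\langle v_k-x^*, x_k-y_k\rangle$. Instead it first rewrites the drift using $w_k(v_k-x_k)=h_k\sqrt{\mu}(v_k-y_k)$, so that the cross term becomes $-h_k\mu\sqrt{\mu}\langle v_k-x^*, v_k-y_k\rangle$, and then polarizes \emph{that} inner product. This directly produces $-\tfrac{h_k\mu\sqrt{\mu}}{2}|v_k-x^*|^2$ (the quadratic half of $-h_k\sqrt{\mu}E_k^{SC}$) and a $+\tfrac{h_k\mu\sqrt{\mu}}{2}|y_k-x^*|^2$ that cancels exactly against the $-\tfrac{h_k\mu\sqrt{\mu}}{2}|y_k-x^*|^2$ from strong convexity. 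No convex-combination identity for $|y_k-x^*|^2$ and no $|x_k-x^*|^2$ ever appear. Your route works too, but at the cost of generating $|x_k-x^*|^2$ from the polarization and then killing it via the identity $|y_k-x^*|^2=(1-w_k)|x_k-x^*|^2+w_k|v_k-x^*|^2-\tfrac{1}{h_k\sqrt{\mu}}|y_k-x_k|^2$ applied to the combined coefficient $-\tfrac{\mu}{2}(1+h_k\sqrt{\mu})$. The paper's choice buys a shorter bookkeeping; yours is a legitimate alternative.

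\textbf{One correction.} Drop the inequality $\tfrac{\mu}{2}|x_k-x^*|^2\le f(x_k)-f^*$. With the cancellations you already list, the $|x_k-x^*|^2$ contributions are $+\tfrac{\mu}{2}$ from your polarization and $-\tfrac{\mu}{2}(1+h_k\sqrt{\mu})(1-w_k)=-\tfrac{\mu}{2}$ from the convex-combination expansion, so they vanish \emph{exactly}. The $-h_k\sqrt{\mu}(f(x_k)-f^*)$ piece of the lemma already comes from your $L$-smoothness step. Inserting the extra inequality would replace an exact zero by a nonnegative remainder and leave you with a bound strictly weaker than the lemma's.
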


\begin{proof}
By $1$-smoothness of the quadratic term in $E^{SC}_k$, %
\begin{align*}
\frac{\mu}{2}\left( |v_{k+1} - x^*|^2 - |v_{k} - x^*|^2  \right) &  = \mu \langle v_k -x^* , v_{k+1}-v_{k} \rangle + \frac{\mu}{2} |v_{k+1}-v_{k} |^2 &&  \text{by $1$-smoothness} \\
&=  -\mu w_k \langle v_k -x^* , v_k -x_k \rangle && \text{apply $v_k$ equation} \\
& - h_k\sqrt{\mu} \langle v_k -x^* , \nabla f(y_k) \rangle - h_k\sqrt{\mu} \langle v_k -x^* ,e_k \rangle\\
& + \frac{\mu}{2} |v_{k+1}-v_{k} |^2.
\end{align*}

We will deal with this quadratic below. First, by \eqref{yck}  we have 
\begin{equation}
\label{eq: xyv 1 sc}
	w_k(v_k - x_k) = \frac{w_k}{1-w_k}(v_k -y_k) = h_k\sqrt{\mu}(v_k -y_k), \qquad \text{from \eqref{wk_inv}}
\end{equation}
and
\begin{equation}
\label{eq: xyv 2 sc}
	v_k -y_k = \frac{1-w_k}{w_k}(y_k -x_k) = \frac{1}{h_k\sqrt{\mu}}(y_k -x_k), \qquad \text{from \eqref{wk_inv}}
\end{equation} 
This gives
\begin{align*}
\frac{\mu}{2}\left( |v_{k+1} - x^*|^2 - |v_{k} - x^*|^2  \right) 
 = & -h_k\mu \sqrt{\mu}\langle v_k -x^* , v_k -y_k \rangle && \text{ by \eqref{eq: xyv 1 sc}}\\
&  - h_k\sqrt{\mu}\langle v_k - y_k , \nabla f(y_k) \rangle  - h_k\sqrt{\mu}\langle y_k - x^* , \nabla f(y_k) \rangle && \text{add and subtract $y_k$} \\
& +\frac{\mu}{2} |v_{k+1}-v_{k} |^2 -h_k\sqrt{\mu} \langle v_k -x^* ,e_k \rangle.\\
\end{align*}
Note that for all $a,b,c \in \mathbb{R}^n$, the quadratic norm satisfies
\[
\langle a - b , a-c \rangle = -\frac{1}{2} |a - b |^2 -\frac{1}{2} |a - c |^2 +\frac{1}{2} |b-c |^2 ,
\]
take $a =v_k$, $b=x^*$ and $c = y_k$ to obtain
\begin{equation}
\label{eq: equality quadratic}
-\langle v_k -x^* , v_k -y_k \rangle = \frac{1}{2}|v_k -x^*|^2 + \frac{1}{2}|v_k - y_k|^2 -\frac{1}{2}|y_k -x^*|^2
\end{equation}
%\red{ where did (23) get used ? what are the a b c terms? } 
Then,
\begin{align*}
\frac{\mu}{2}\left( |v_{k+1} - x^*|^2 - |v_{k} - x^*|^2  \right) \leq &  -\frac{h_k\mu \sqrt{\mu}}{2}\left( |v_k -x^*|^2 + |v_k - y_k|^2 -|y_k -x^*|^2 \right) && \text{by \eqref{eq: equality quadratic}}\\
& - \langle y_k - x_k , \nabla f(y_k) \rangle -h_k\sqrt{\mu} \left( f(y_k) - f^* +\frac{\mu}{2}|y_k -x^*|^2 \right) && \text{by $\mu$-convexity}\\
&+\frac{\mu}{2} |v_{k+1}-v_{k} |^2 -h_k\sqrt{\mu} \langle v_k -x^* ,e_k \rangle.
\end{align*}
Then rearranging the terms, 
\begin{align}
\label{eq: dissip quadratic}
\frac{\mu}{2}\left( |v_{k+1} - x^*|^2 - |v_{k} - x^*|^2  \right) \leq &  - h_k \sqrt{\mu} \left( f(y_k) - f^* + \frac{\mu}{2}|v_k - x^*|^2\right) \nonumber\\
&  -\frac{h_k\mu \sqrt{\mu}}{2}|v_k - y_k|^2  - \langle y_k - x_k , \nabla f(y_k) \rangle \nonumber\\
 &+\frac{\mu}{2} |v_{k+1}-v_{k} |^2 -h_k\sqrt{\mu} \langle v_k -x^* ,e_k \rangle.
\end{align}

 In the first term in the right hand side, we would like $f(x_k)$ instead of $f(y_k)$. To obtain that, we study the quadratic term in the right hand side. Using the algorithm and expanding the square,

\begin{align*}
\frac{\mu}{2} |v_{k+1}-v_{k} |^2 = &  \frac{\mu}{2} \left( |w_k (x_k -v_k)|^2 -\frac{2h_k}{\sqrt{\mu}}\langle w_k (x_k -v_k), \nabla f(y_k) +e_k \rangle +\frac{h_k^2}{\mu}|\nabla f(y_k) +e_k|^2 \right) \\
 =& \frac{\mu}{2} \left( |y_k -x_k |^2 + \frac{2h_k}{\sqrt{\mu}}\langle y_k -x_k , \nabla f(y_k) +e_k \rangle +\frac{h_k^2}{\mu}|\nabla f(y_k) +e_k|^2 \right)&& \text{by \eqref{eq: xyv 1 sc}}\\
  = & \frac{\mu}{2}  |y_k -x_k |^2 +h_k \sqrt{\mu}\langle y_k -x_k , \nabla f(y_k)\rangle + \frac{h_k^2}{2}|\nabla f(y_k) |^2 && \text{expand the square}\\
 & + h_k \sqrt{\mu}\langle y_k -x_k , e_k \rangle + h_k^2\langle  \nabla f(y_k) +\frac{e_k}{2} , e_k\rangle .
\end{align*}
 Then using the $L$-smoothness of $f$, we obtain
\begin{align*}
\frac{\mu}{2} |v_{k+1}-v_{k} |^2  \leq & \frac{\mu}{2}  |y_k -x_k |^2 +h_k\sqrt{\mu} \left( f(y_k) - f(x_k) +\frac{L}{2}|y_k -x_k|^2 \right)  + \frac{h_k^2}{2}|\nabla f(y_k) |^2\\
 & + h_k \sqrt{\mu}\langle y_k -x_k , e_k \rangle + h_k^2\langle  \nabla f(y_k) +\frac{e_k}{2} , e_k\rangle\\
  \leq & h_k\sqrt{\mu} ( f(y_k) - f(x_k) ) + \left( \frac{\mu}{2} + \frac{L\sqrt{\mu}h_k}{2}\right)|y_k -x_k|^2  + \frac{h_k^2}{2}|\nabla f(y_k) |^2\\
 & + h_k \sqrt{\mu}\langle y_k -x_k , e_k \rangle + h_k^2\langle  \nabla f(y_k) +\frac{e_k}{2} , e_k\rangle,
\end{align*} 
 rearranging all the terms. Finally, inserting this inequality into \eqref{eq: dissip quadratic}, we get
\begin{align*}
\frac{\mu}{2}\left( |v_{k+1} - x^*|^2 - |v_{k} - x^*|^2  \right) \leq  & - h_k \sqrt{\mu} \left( f(y_k) - f^* + \frac{\mu}{2}|v_k - x^*|^2\right) +h_k\sqrt{\mu} ( f(y_k) - f(x_k) )  \\
&  -\frac{h_k\mu \sqrt{\mu}}{2}|v_k - y_k|^2 +  \left( \frac{\mu}{2} + \frac{L\sqrt{\mu}h_k}{2}\right)|y_k -x_k|^2 \\
& - \langle y_k - x_k , \nabla f(y_k) \rangle + \frac{h_k^2}{2}|\nabla f(y_k) |^2\\
 &-h_k\sqrt{\mu} \langle v_k -x^* ,e_k \rangle+ h_k \sqrt{\mu}\langle y_k -x_k , e_k \rangle + h_k^2\langle  \nabla f(y_k) +\frac{e_k}{2} , e_k\rangle.
\end{align*}
This concludes the proof using \eqref{eq: xyv 2 sc}.
\end{proof}

\begin{proof}[Proof of Proposition \ref{thm:Acc GD SC}]
From Lemma \ref{le: dissipation f SC} and Lemma \ref{le: dissipation quadratic SC}, the dissipation of $E_k^{SC} $ is 
\begin{eqnarray*}
E_{k+1}^{SC}  - E_k^{SC} 
& \leq & - h_k\sqrt{\mu} E_k^{SC} +h_k \left( h_k - \frac{1}{\sqrt{L}} \right) |\nabla f(y_k) |^2 + \frac{h_k\sqrt{\mu}}{2} \left( L - \frac{1}{h_k^2}\right) |x_k -y_k|^2   \\
& & -h_k \langle \sqrt{\mu}(x_k -y_k + v_k -x^*) -\frac{1}{\sqrt{L}}\nabla f(y_k), e_k \rangle +2h_k^2\langle  \nabla f(y_k) +\frac{e_k}{2} , e_k\rangle,
\end{eqnarray*}
Then, since $h_k \leq \frac{1}{\sqrt{L}}$, we have \eqref{eq: dissipation lip sto str convex}. To conclude the proof, remark that 
\[
\mathbb{E}[\beta_k] =  2h_k \left\langle \nabla f(y_k) , \mathbb{E}[e_k] \right\rangle +h_k\mathbb{E}[|e_{k}|^2]  -\left\langle \sqrt{\mu}(x_k - y_k + v_k -x^*) -\frac{1}{\sqrt{L}}\nabla f(y_k),\mathbb{E}[e_k]\right\rangle. 
\]
Since $\mathbb{E}[e_k] =0$ and $\mathbb{E}[|e_{k}|^2]  \leq \sigma^2$, \eqref{ass: mean and varience error}, $\mathbb{E}[\beta_k] \leq h_k \sigma^2$ which concludes the proof of Proposition \ref{thm:Acc GD SC}.
\end{proof}

\subsection{Convergence rates for the last iterate}
For a constant learning rate $h$, we recover the accelerated exponential convergence to a neighborhood of the minimum.  
%A similar result appears also in~\cite{Aybat2019}.
\begin{proposition}
\label{prop:convergence neighborhood constant learning rate}
Choose a constant learning rate $h \in \left(0, \frac{1}{\sqrt{L}} \right]$ for \eqref{Sto_FE-SC-x}, \eqref{Sto_FE-SC-v}, \eqref{yck}, \eqref{wk_stock}. Then, for all $k\geqslant 0$,
\[
\mathbb{E}[E_k^{SC}] \leq r^k E_0^{SC} + (1 - r^k ) h \frac{\sigma^2}{\sqrt{\mu}},
\qquad r = (1 - h \sqrt{\mu})
\]
\end{proposition}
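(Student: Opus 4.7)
The plan is to obtain the result as a direct iteration of the one-step expected dissipation established in Proposition~\ref{thm:Acc GD SC}. Specializing that bound to constant learning rate $h_k \equiv h \in (0, 1/\sqrt{L}]$ yields the linear recursion
\[
\mathbb{E}[E_{k+1}^{SC}] \;\leq\; r\, \mathbb{E}[E_k^{SC}] + h^2 \sigma^2, \qquad r := 1 - h\sqrt{\mu},
\]
where the additive term comes from combining the pointwise inequality $E_{k+1}^{SC} \leq (1-h\sqrt{\mu})E_k^{SC} + h\beta_k$ with the bound $\mathbb{E}[\beta_k] \leq h\sigma^2$ derived in the proof of Proposition~\ref{thm:Acc GD SC}. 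The hypothesis $h \leq 1/\sqrt{L}$, together with $\mu \leq L$, guarantees $h\sqrt{\mu} \leq 1$, so that $r \in [0,1)$ and the recursion is contractive toward a fixed point.

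Next I would unroll this recursion by a straightforward induction on $k$. The base case is the convention $\mathbb{E}[E_0^{SC}] = E_0^{SC}$, and the inductive step is the one-step bound above. This gives
\[
\mathbb{E}[E_k^{SC}] \;\leq\; r^k\, E_0^{SC} + h^2 \sigma^2 \sum_{j=0}^{k-1} r^j.
\]
Evaluating the geometric sum and using the identity $1 - r = h\sqrt{\mu}$ yields
\[
h^2 \sigma^2 \cdot \frac{1 - r^k}{1 - r} \;=\; h^2 \sigma^2 \cdot \frac{1 - r^k}{h\sqrt{\mu}} \;=\; (1 - r^k)\, h\, \frac{\sigma^2}{\sqrt{\mu}},
\]
which is exactly the advertised error term.

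There is really no hard part here: the substantive work is entirely in Proposition~\ref{thm:Acc GD SC}, which packages the stochastic error into a single additive $\sigma^2$ term with the correct prefactor. The present proposition is an elementary geometric-series consequence. The only bookkeeping point worth noting is to track the $h$ versus $h^2$ dependence of the noise term carefully: the dissipation inequality carries a factor $h\beta_k$ and $\mathbb{E}[\beta_k]\leq h\sigma^2$, so the additive noise per step is of order $h^2\sigma^2$; after summing the geometric series $(1-r^k)/(1-r) = (1-r^k)/(h\sqrt{\mu})$, one factor of $h$ cancels and leaves the stationary neighborhood size $h\sigma^2/\sqrt{\mu}$ stated in the proposition. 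Observing that $r^k + (1-r^k) = 1$ then makes the bound a genuine convex combination between the initial value $E_0^{SC}$ and the asymptotic neighborhood, which is the content of the accelerated version~\eqref{eq: accelerated bottou inequality} announced in the introduction.
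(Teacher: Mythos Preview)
Your proposal is correct and follows essentially the same approach as the paper: both invoke the one-step expected dissipation from Proposition~\ref{thm:Acc GD SC} with $h_k\equiv h$ and then solve the resulting linear recursion. The only cosmetic difference is that the paper subtracts the fixed point $h\sigma^2/\sqrt{\mu}$ from both sides before iterating, whereas you unroll and sum the geometric series directly; the two computations are equivalent.
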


\begin{proof}
From Proposition \ref{thm:Acc GD SC}, we have
\[
\mathbb{E}[E^{SC}_{k+1}] \leq (1- \sqrt{\mu} h)\mathbb{E}[E^{SC}_k]  + h^2\sigma^2.
\]
Note that $h^2\sigma^2 = h \sqrt{\mu}\cdot \frac{h\sigma^2}{\sqrt{\mu}}$. Subtracting $\frac{h\sigma^2}{\sqrt{\mu}}$ from both side, we get
\[
\mathbb{E}[E^{SC}_{k+1}] -\frac{h\sigma^2}{\sqrt{\mu}}\leq (1- \sqrt{\mu} h)\left( \mathbb{E}[E^{SC}_k]  - \frac{h\sigma^2}{\sqrt{\mu}} \right).
\]
Then, by induction,
\[
\mathbb{E}[E^{SC}_{k}] -\frac{h\sigma^2}{\sqrt{\mu}}\leq (1- \sqrt{\mu} h)^k\left( E^{SC}_0 -\frac{h\sigma^2}{\sqrt{\mu}} \right),
\]
which concludes the proof.
\end{proof}

From the result above, we observe that we can initialize the algorithm with a small number of constant learning rate steps, so that the following condition holds. 
\begin{equation}
\label{Ecrit}\tag{$E_0$}
	E_0^{SC} \leq E_{crit} =  \frac{2 \sigma^2}{\sqrt{\mu L}}
\end{equation}
This condition is necessary to initialize the induction, so that the initial learning rate is small enough.  In the following remark we estimate the number of steps needed.
\begin{remark}
If we run the constant learning rate algorithm with $h_k= \frac{1}{\sqrt{L}}$ then, from Proposition \ref{prop:convergence neighborhood constant learning rate},
\[
\mathbb{E}[E_k^{SC}] \leq \left(1 - 
\sqrt{\frac{\mu}{L}}\right)^k \left(E_0^{SC} -\frac{\sigma^2}{\sqrt{L\mu}} \right) + \frac{\sigma^2}{\sqrt{L\mu}}.
\]
So the maximum number of steps, $K$, needed in order for \eqref{Ecrit} to hold is
\[
K =  \frac{\log(E_{crit}) - \log (2E_0^{SC})}{\log (1- \sqrt{\mu/L}) }.
\]
\end{remark}
In the ``warm-start'' case, the special case where the value of $E_0$ is known and is below the critical value, then we initialize with the corresponding (smaller) learning rate, given by the formula which follows.  

\begin{proposition}
\label{prop: decrease expectation E_k str convex acc}
Assume that \eqref{Ecrit} holds, by taking $K$ steps of the constant learning rate algorithm, if necessary.  
Let $h_k$ be given by 
\begin{equation}\label{hkdefnk0}
	 h_k := \frac{2}{\sqrt{\mu} (k + k_0)}, 
\end{equation}
where $k_0 = \max\{2\sqrt{C}, \frac{4\sigma^2}{\mu E_0^{SC}}\}$.  
Then 
\[
\mathbb{E}[E^{SC}_k] \leq \frac{4\sigma^2}{\mu (k +k_0)}.
\]
\end{proposition}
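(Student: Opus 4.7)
The statement is an upper bound of the form $a_k \leq B/(k+k_0)$ for the recursion produced by Proposition~\ref{thm:Acc GD SC}. My plan is a straightforward induction on $k$, using that recursion as the only dynamical input. There are three things to verify: that the chosen learning rate is admissible (i.e. $h_k \leq 1/\sqrt{L}$ so Proposition~\ref{thm:Acc GD SC} applies), that the base case holds, and that the induction step goes through.

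\textbf{Setup.} Write $a_k = \mathbb{E}[E_k^{SC}]$. Proposition~\ref{thm:Acc GD SC} gives, after taking expectations and using $\mathbb{E}[\beta_k]\leq h_k \sigma^2$, the scalar recursion
\begin{equation*}
a_{k+1} \leq (1-\sqrt{\mu}\,h_k)\,a_k + h_k^2\,\sigma^2,
\end{equation*}
valid whenever $h_k\leq 1/\sqrt{L}$. With $h_k = 2/(\sqrt{\mu}(k+k_0))$ and $k_0 \geq 2\sqrt{C_f} = 2\sqrt{L/\mu}$ this gives $\sqrt{\mu}\,h_k = 2/(k+k_0)\leq 2/k_0\leq 1/\sqrt{C_f}\cdot\sqrt{\mu}/\sqrt{\mu}$; more directly $h_0 = 2/(\sqrt{\mu}\,k_0)\leq 2/(\sqrt{\mu}\cdot 2\sqrt{L/\mu}) = 1/\sqrt{L}$, and since $h_k$ is decreasing this admissibility holds for every $k$.

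\textbf{Base case.} The target inequality at $k=0$ is $a_0 = E_0^{SC} \leq 4\sigma^2/(\mu k_0)$. This is exactly the content of the choice $k_0 \geq 4\sigma^2/(\mu E_0^{SC})$, which is guaranteed by the warm-start assumption \eqref{Ecrit} combined with the definition of $k_0$ in \eqref{hkdefnk0}.

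\textbf{Inductive step.} Assume $a_k \leq 4\sigma^2/(\mu(k+k_0))$. Substituting into the recursion and using $\sqrt{\mu}\,h_k = 2/(k+k_0)$ and $h_k^2 = 4/(\mu(k+k_0)^2)$,
\begin{equation*}
a_{k+1} \leq \left(1-\frac{2}{k+k_0}\right)\frac{4\sigma^2}{\mu(k+k_0)} + \frac{4\sigma^2}{\mu(k+k_0)^2} = \frac{4\sigma^2}{\mu(k+k_0)} - \frac{4\sigma^2}{\mu(k+k_0)^2}.
\end{equation*}
It then suffices to verify the elementary inequality
\begin{equation*}
\frac{1}{k+k_0} - \frac{1}{(k+k_0)^2} \leq \frac{1}{k+k_0+1},
\end{equation*}
which, after clearing denominators, reduces to $(k+k_0)^2 - 1 \leq (k+k_0)^2$, and is therefore true. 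This closes the induction and gives exactly $a_{k+1}\leq 4\sigma^2/(\mu(k+1+k_0))$.

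\textbf{Expected obstacle.} There is no real obstacle: the argument is a textbook $1/k$ SGD-type induction. The only subtle point is bookkeeping around $k_0$: one has to check that both purposes served by $k_0$, namely making $h_0$ admissible ($k_0\geq 2\sqrt{C_f}$) and seeding the induction ($k_0\geq 4\sigma^2/(\mu E_0^{SC})$), are both encoded in the $\max$, and that \eqref{Ecrit} is precisely what lets the second term dominate without making $k_0$ unnecessarily large. Once this is noted, the proof is a one-line base case plus a three-line algebraic induction step.
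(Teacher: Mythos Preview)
Your proof is correct and follows essentially the same induction as the paper. One small remark: your base case is actually cleaner than the paper's, which splits into two cases according to which term realizes the $\max$ in $k_0$; you instead use directly that $k_0 \geq 4\sigma^2/(\mu E_0^{SC})$ by definition of the $\max$, which already gives $E_0^{SC} \leq 4\sigma^2/(\mu k_0)$ without invoking \eqref{Ecrit} at all (so your aside that \eqref{Ecrit} is needed for the base case is slightly off---it is only needed, as you later correctly observe, to make the second term in the $\max$ the active one).
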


\begin{remark}
Observe that the rate is bounded by 
\[ 
\frac{4\sigma^2}{ \mu ( k +2 \sqrt{C_f} ) }.
\]
\end{remark}

\begin{proof}
The proof of Proposition \ref{prop: decrease expectation E_k str convex acc} can be done by induction and is an adaptation of the one of \cite{bottou2018optimization} or \cite{Oberman_Prazeres}. 
%\begin{itemize} \item 
Initialization: If $k_0 = 2 \sqrt{C_f}$, i.e.
\[
2 \sqrt{C_f} \geq \frac{4\sigma^2}{\mu E_0^{SC}} \Rightarrow E_0^{SC} = \frac{2 \sigma^2}{\sqrt{\mu L}},
\]
then, $h_k \leqslant h_0= \frac{2}{\sqrt{\mu}(2 \sqrt{C_f})} = \frac{1}{\sqrt{L}}$ and,
\[
E_0^{SC} \leq \frac{4\sigma^2}{\mu (k_0)} = \frac{4\sigma^2}{\sqrt{L \mu}} = E_{crit}, 
\]
which holds by assumption. On the other hand, if $k_0 =\frac{4\sigma^2}{\mu E_0^{SC}}$, i.e.
\[
2 \sqrt{C_f} \leq \frac{4\sigma^2}{\mu E_0^{SC}} \Leftrightarrow E_0^{SC} \leq \frac{2 \sigma^2}{\sqrt{\mu L}},
\]
then $h_k \leq h_0 = \frac{2 \mu E_0^{SC}}{4\sqrt{\mu}\sigma^2} = \frac{ \sqrt{\mu} E_0^{SC}}{2\sigma^2} \leq \frac{1}{\sqrt{L}}$ and 
\[
E_0^{SC} \leq \frac{4\sigma^2}{\mu (k_0)} = E_0^{SC}.
\] 
%\item 
For $k \geqslant 0$, Proposition \ref{thm:Acc GD SC} gives
\[
\mathbb{E}[E^{SC}_{k+1}] \leq (1- \sqrt{\mu} h_k)\mathbb{E}[E^{SC}_k]  + h_k^2\sigma^2.
\]
By definition of $h_k$, and using the induction assumption
\begin{align*}
\mathbb{E}[E^{SC}_{k+1}] \leq & \left( 1 - \frac{2}{k + k_0} \right) \frac{4\sigma^2}{\mu (k + k_0)} + \frac{4 \sigma^2}{\mu (k+k_0)^2}\\
\leq & \frac{4 \sigma^2}{\mu(k + k_0)} -2  \frac{4 \sigma^2}{\mu(k + k_0)^2}+ \frac{4 \sigma^2}{\mu (k+k_0)^2}\\
\leq & \frac{4 \sigma^2}{\mu(k + k_0)} -  \frac{4 \sigma^2}{\mu(k + k_0)^2}\\
\leq & \frac{4 \sigma^2( k + k_0 -1)}{\mu(k + k_0)^2}.
\end{align*}
Observe that 
\[
(k + k_0 -1) (k + k_0 -1)  \leq (k +k_0)^2 \Rightarrow \frac{ k + k_0 -1}{(k + k_0)^2} \leq \frac{1}{k+k_0 +1},
\]
which gives
\[
\mathbb{E}[E^{SC}_{k+1}] \leq \frac{4 \sigma^2}{\mu(k +1+ k_0)},
\]
and concludes the proof.
%\end{itemize}
\end{proof}

%N2N_derivation_Nesterov

\section{Derivation A-GD and A-SGD: Convex case}
\label{section: derivation Nest C}

\subsection{Derivation of accelerated algorithm}
In the case of convex functions, we have a similar, but different derivation of Nesterov's accelerated gradient descent. 
As before, the starting point is to couple gradient descent, \eqref{eq: Gradient Descent} with a faster doubled variable.  
Again, to avoid the evaluation of the gradient at two different points, we reduce to a single gradient evaluation at $y_k$, given as a convex combination of $x_k$ and $v_k$, with weight $w_k$.  
\begin{align}
\label{ycc}
	y_k  & =   \left(1-w_k\right) x_k  + w_k v_k	
\end{align}
In this case we do not have a condition number to use as a multiplier for the auxiliary variable, so instead, we choose a time-dependent multiplier $1/w_k$.
The $x$ equation will be coupled as before, but using $w_k$. 
However, in this case, the coupling is asymmetric, there is no coupling term in the $v$ equation. 
\begin{align}
	\label{xprimew}
	x_{k+1} - x_k &= w_k (v_k - x_k) - \frac{1}{L} \nabla f(y_k)\\
	\label{vprimew}
    v_{k+1} - v_k &= - \frac{1}{w_k L}\nabla f(y_k)
\end{align}
This yields Nesterov's method when we choose $w_k$ appropriately. 
\begin{proposition}
\label{prop: equivalence Nesterov convex}
Set $w_k =\frac{2}{k+2}$, then \eqref{xprimew} \eqref{vprimew} along with \eqref{ycc} corresponds to Nesterov's method in the convex case, \eqref{genericN1}, \eqref{genericN2}, \eqref{AGD_Convex_params}.
\end{proposition}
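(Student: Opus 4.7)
The plan is to mirror the argument used for Proposition \ref{prop: equivalence Nesterov SG} in the strongly convex case, while being careful with the $k$-dependence of $w_k$.

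First, I would recover \eqref{genericN1} immediately from \eqref{xprimew}: using \eqref{ycc}, one has $w_k(v_k-x_k) = y_k - x_k$, so \eqref{xprimew} collapses to $x_{k+1} = y_k - \tfrac{1}{L}\nabla f(y_k)$. This also lets me eliminate the gradient from the $v$-equation, since $\tfrac{1}{L}\nabla f(y_k) = y_k - x_{k+1}$ substituted into \eqref{vprimew} gives
\[
v_{k+1} = v_k + \frac{1}{w_k}(x_{k+1} - y_k).
\]

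Next, I would solve \eqref{ycc} for $v_k$, namely $v_k = \tfrac{1}{w_k}(y_k - (1-w_k)x_k)$, and apply the same identity at index $k+1$ to eliminate both $v_k$ and $v_{k+1}$ from the recursion above. After multiplying through by $w_{k+1}$, the $y_k$ terms cancel (this is the analogue of the cancellation in Proposition \ref{prop: equivalence Nesterov SG}), leaving
\[
y_{k+1} = x_{k+1} + \frac{w_{k+1}(1-w_k)}{w_k}(x_{k+1} - x_k),
\]
which is exactly the form \eqref{genericN2} with $\beta_k = w_{k+1}(1-w_k)/w_k$.

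Finally, I would substitute $w_k = 2/(k+2)$ and $w_{k+1} = 2/(k+3)$ to verify
\[
\beta_k = \frac{\tfrac{2}{k+3}\cdot \tfrac{k}{k+2}}{\tfrac{2}{k+2}} = \frac{k}{k+3},
\]
matching \eqref{AGD_Convex_params}. The only real obstacle is the index bookkeeping for $w_k$ versus $w_{k+1}$ when eliminating the two instances of $v$; because the coupling is now asymmetric (no gradient-coupling term in the $v$-equation) and $w$ is time-dependent, one cannot simply copy the strongly convex calculation, but the $y_k$ cancellation still occurs and the resulting algebra is short.
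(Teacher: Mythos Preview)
Your proposal is correct and follows essentially the same route as the paper's own proof: derive \eqref{genericN1} from \eqref{xprimew} via \eqref{ycc}, use it to eliminate $\nabla f(y_k)$ from \eqref{vprimew}, then substitute $v_k$ and $v_{k+1}$ from \eqref{ycc} at indices $k$ and $k+1$, observe the $y_k$ cancellation, and read off $\beta_k = \tfrac{w_{k+1}(1-w_k)}{w_k} = \tfrac{k}{k+3}$. The index bookkeeping you flag is handled exactly as you describe, and the paper's computation is no more elaborate than your outline.
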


\begin{proof}
By definition of $y_k$ in \eqref{ycc}, we obtain \eqref{genericN1} from \eqref{xprimew},
\[
x_{k+1} = y_k - \frac{1}{L}\nabla f(y_k)
\]
Next eliminate the variable $v$ in \eqref{vprimew} using  using \eqref{ycc} at $k$ and $k+1$, and eliminate the gradient term using \eqref{genericN1},  to obtain 
\begin{equation}
	\label{step2}
	\frac{1}{w_{k+1}} (y_{k+1} - (1-w_{k+1})x_{k+1})
 = \frac{1}{w_k}(y_{k} - (1-w_k)x_{k}) + \frac 1{w_k} (x_{k+1}-y_k)  
\end{equation}
Simplify, first noticing that the $y_k$ terms cancel:
\[ 
y_{k+1} = x_{k+1} + \frac{w_{k+1}}{w_k}(1-w_k)(x_{k+1}-x_k) 
\]
Making the choice $w_k  =\frac{2}{k+2}$ leads to $\beta_k = k/(k+3)$ as in \eqref{AGD_Convex_params}, as desired. 
\end{proof}

\subsection{Extension to the stochastic case: variable learning rate}
In order to derive the right algorithm in the stochastic case, we generalize from 
\eqref{ycc}, \eqref{xprimew} \eqref{vprimew} to allow for a variable learning rate. 

%TODO: transition this smoothly. 

 Based on Proposition \ref{prop: equivalence Nesterov convex}, $w_k$ can be seen as
\[
w_k = \frac{2}{k+2} = \frac{2h_k}{h_k(k+2)} = \frac{2h_k}{t_k}, \qquad \text{ with } t_k = h_k(k+2),
\]
 with constant time step $h_k=\frac{1}{\sqrt{L}}$.

In the following, we introduce a variable time step $h_k$ which may varies between $0$ and $\frac{1}{\sqrt{L}}$. Given a time step/learning rate $h_k$ and a discrete time $t_k$, define 
\begin{equation}
	\label{wdefnHT}
	w_k = \frac{2h_k}{t_k}
\end{equation}

Then, using the \eqref{wdefnHT} in %\eqref{ycc} and 
\eqref{xprimew} 
we have the system
\begin{equation}
\label{FEg}\tag{FE-C}
\left \{
\begin{aligned}
		x_{k+1} - x_k &=  \frac{2h_k}{t_k} (v_k - x_k) - \frac{h_k}{\sqrt{L}}\nabla f(y_k)\\
    v_{k+1} - v_k &= - \frac{h_k t_k}{2}\nabla f(y_k)
\end{aligned}
\right . 
\end{equation}	
along with \eqref{ycc}.  

In the next proposition, we eliminate the $v$ variable in order to rewrite the algorithm of the form \eqref{genericASGD1}-\eqref{genericASGD2}. 
%show that \eqref{FEg} can be rewritten as Nesterov's method, \eqref{eq: Nesterov}, only with respect to $x_k$ and $y_k$ variables.
\begin{proposition}
\label{prop: generealization Nesterov C}
Let $h_k \in \left(0, \frac{1}{\sqrt{L}} \right]$ and $t_k$ be a divergent sequence. Then \eqref{FEg} along with \eqref{wdefnHT} can be rewritten as  
%\label{eq: Nesterov like system}\tag{Pert-C-Nest}
%\begin{align}
%		x_{k+1} &= y_k  - \frac{h_k}{\sqrt{L}}\nabla f(y_k)\\
%   y_{k+1} & = x_{k+1} + \frac{h_{k+1} t_{k}}{h_{k} t_{k+1}}\left(1 - \frac{2h_k}{t_k}\right)(x_{k+1} - x_k) + \frac{ t_{k}}{ t_{k+1}}h_{k+1}\left( \frac{1}{h_k} - \sqrt{L} \right)(x_{k+1} - y_k).
%\end{align}
\begin{align}
		x_{k+1} &= y_k  - \frac{\alpha_k}{L} \nabla f(y_k)\\
   y_{k+1} & = x_{k+1} + \beta_{k}(x_{k+1} - x_k) + \gamma_{k} ( y_k -x_k)
\end{align}
\begin{equation}
	\alpha_k = \sqrt{L} h_k
	\qquad
	\beta_k = \frac{h_{k+1}}{t_{k+1}}(t_k \sqrt{L} -1)  %\frac{w_{k+1}}{w_{k}}\left(1 - w_k \right)
		\qquad
	\gamma_k = \frac{t_k}{t_{k+1}} \frac{h_{k+1}}{h_k}(1 - \sqrt{L}h_k) %\frac{w_{k+1}}{w_{k}} \left( 1 - h_k\sqrt{L} \right).
\end{equation}
%where $w_k = 2h_k/t_k$. 
% \[
%   y_{k+1} = x_{k+1} + \frac{w_{k+1}}{w_{k}}\left(1 - w_k \right)(x_{k+1} - x_k) + \frac{w_{k+1}}{w_{k}} \left( 1 - h_k\sqrt{L} \right)(x_{k+1} - y_k).
%\]
\end{proposition}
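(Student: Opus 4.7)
The proof will mirror the structure of Proposition~\ref{prop: generalization Nesterov SC}, which eliminated the $v$ variable in the strongly convex case. The plan is to use \eqref{ycc} twice (first to simplify the $x$ update into the desired form, and second to substitute for $v_k$ and $v_{k+1}$), and then regroup the result as an affine combination of $x_{k+1}, y_k, x_k$.

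First, I would note that the identity $y_k - x_k = w_k(v_k - x_k)$ from \eqref{ycc} turns the $x$ equation in \eqref{FEg} directly into
\[
x_{k+1} = y_k - \frac{h_k}{\sqrt{L}}\nabla f(y_k) = y_k - \frac{\alpha_k}{L}\nabla f(y_k),
\]
which yields the first claim with $\alpha_k = \sqrt{L}\,h_k$. This identity also lets me eliminate the gradient from the $v$ equation by writing $\nabla f(y_k) = (\sqrt{L}/h_k)(y_k - x_{k+1})$, so that the second line of \eqref{FEg} becomes
\[
v_{k+1} - v_k \;=\; \frac{t_k \sqrt{L}}{2}\,(x_{k+1} - y_k),
\]
where the factor $h_k t_k/2$ has collapsed after cancellation of $h_k$.

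Next, I would solve \eqref{ycc} for $v_k = [y_k - (1-w_k)x_k]/w_k$ and analogously for $v_{k+1}$, substitute both into the displayed identity, and multiply through by $w_{k+1}$. This gives a single equation expressing $y_{k+1}$ as a linear combination of $x_{k+1}$, $y_k$, $x_k$, with coefficients depending on $w_k, w_{k+1}$, and the quantity $w_{k+1} t_k \sqrt{L}/2$. Using the definition $w_k = 2h_k/t_k$, this latter quantity simplifies to $h_{k+1} t_k \sqrt{L}/t_{k+1}$, and the ratio $w_{k+1}/w_k$ simplifies to $(h_{k+1} t_k)/(h_k t_{k+1})$.

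Finally, I would rewrite this linear combination in the target form $y_{k+1} = x_{k+1} + \beta_k(x_{k+1} - x_k) + \gamma_k(y_k - x_k)$ by reading off the coefficient of each of $x_{k+1}$, $y_k$, and $x_k$ and matching them to $1 + \beta_k$, $\gamma_k$, and $-(\beta_k + \gamma_k)$ respectively. The coefficient of $y_k$ will immediately produce $\gamma_k = (t_k/t_{k+1})(h_{k+1}/h_k)(1 - \sqrt{L}h_k)$ after factoring, and the coefficient of $x_{k+1}$ yields $\beta_k$ in the stated form; the coefficient of $x_k$ then automatically equals $-(\beta_k + \gamma_k)$, which serves as a consistency check. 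To confirm the claim that this generalizes \eqref{AGD_Convex_params}, I would substitute $h_k = 1/\sqrt{L}$ and $t_k = h_k(k+2)$: then $1-\sqrt{L}h_k = 0$ forces $\gamma_k = 0$, and $\beta_k$ collapses to Nesterov's convex coefficient. The only obstacle is algebraic bookkeeping, specifically keeping track of which index ($k$ versus $k+1$) each $w$ and $t$ refers to; no new conceptual ingredient is required beyond the elimination strategy already used in Proposition~\ref{prop: generalization Nesterov SC}.
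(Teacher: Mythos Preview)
Your proposal is correct and follows essentially the same route as the paper: eliminate $\nabla f(y_k)$ via the $x$-update, substitute $v_k,v_{k+1}$ using \eqref{ycc}, multiply through by $w_{k+1}$, and read off the coefficients using $w_k=2h_k/t_k$. The only differences are cosmetic---the paper carries the factor $\tfrac{h_k\sqrt{L}}{w_k}$ rather than $\tfrac{t_k\sqrt{L}}{2}$ before simplifying, and does not include your extra consistency check on the $x_k$ coefficient or the specialization to \eqref{AGD_Convex_params}.
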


\begin{proof}
Write the last equation in \eqref{FEg} as 
\begin{equation}
\label{vht}
    v_{k+1} - v_k = - \frac{h_k \sqrt{L}}{w_k}\frac{h_k}{\sqrt{L}}\nabla f(y_k)
\end{equation}
Proceeding  as in Proposition \ref{prop: equivalence Nesterov convex}, \eqref{step2} becomes
\begin{equation}
%	\label{step2}
	\frac{1}{w_{k+1}} (y_{k+1} - (1-w_{k+1})x_{k+1})
 = \frac{1}{w_k}(y_{k} - (1-w_k)x_{k}) + \frac {h_k \sqrt L}{w_k} (x_{k+1}-y_k)  
\end{equation}
	which leads to 
 \begin{align*}
  y_{k+1} = & x_{k+1}  -w_k x_{k+1} + \frac{w_{k+1}}{w_{k}} y_k  - \frac{w_{k+1}}{w_{k}}\left(1 - w_k \right)x_k + \frac{w_{k+1}}{w_{k}}  h_k\sqrt{L} (x_{k+1} - y_k)\\
   =& x_{k+1} + \frac{w_{k+1}}{w_{k}}\left( h_k\sqrt{L} - w_k \right)(x_{k+1} -x_k) + \frac{w_{k+1}}{w_{k}}\left( 1 - h_k\sqrt{L} \right)(y_k -x_k),
 \end{align*}
 
which gives the result.
\end{proof}

%By definition of $y_k$ in \eqref{FEg}, we directly obtain the first line, which gives
%\begin{eqnarray*}
%\label{eq: first line}
%- \frac{h_k}{\sqrt{L}}\nabla f(y_k) = x_{k+1} - y_k.
%\end{eqnarray*} 
%Now we have to elimnate the variable $v$. Again by definition of $y_{k+1}$ and $y_k$ we have
%\[
%v_{k+1} = \frac{1}{h_{k+1} \tilde{w}_{k+1}}\left( y_{k+1} - (1 - h_{k+1} \tilde{w}_{k+1}) x_{k+1} \right),
%\]
%and the same for $v_k$,
%\[
%v_{k} = \frac{1}{h_{k} \tilde{w}_{k}}\left( y_{k} - (1 - h_{k} \tilde{w}_{k}) x_{k} \right).
%\]
%Now combining these formulas in the update of $v$ in \eqref{eq: general discretization} and \eqref{eq: first line}, we obtain
%\begin{multline*}
%y_{k+1}  = \left( 1 - h_{k+1} \tilde{w}_{k+1} + h_{k+1} \frac{\tilde{w}_{k+1}}{\tilde{w}_{k}}\right) x_{k+1} - \frac{h_{k+1} \tilde{w}_{k+1}}{h_{k} \tilde{w}_{k}}(1 - h_k\tilde{w_k}) x_k\\
%-  h_{k+1} \frac{\tilde{w}_{k+1}}{\tilde{w}_{k}} \left( \sqrt{L} - \frac{1}{h_k} \right) y_k.
%\end{multline*}
%Rearranging all the term, we obtain \eqref{eq: Nesterov like system}.
%\end{proof}

\subsection{Transition to continuous time} 

In order to perform our Lyapunov analysis, we introduce now a continuous interpretation of  \eqref{FEg}. System  \eqref{FEg} can be seen as the forward discretization of 
\begin{align}	
\label{NODE}\tag{1st-ODE}
\left \{
	\begin{aligned}
	\dot{x} &= \frac{2}{t}(v-x) -\frac{1}{\sqrt{L}}\nabla f(x)\\
	\dot{v} &= - \frac{t}{2}  \nabla f(x),	
	\end{aligned}
\right . 
\end{align}
when $h_k$ goes to $0$.

At least formally, the system \eqref{NODE} is equivalent to the following second order ODE
\begin{equation}\label{OurEqnCC}\tag{H-ODE}
	\ddot{x} + \frac{3}{t}\dot{x} + \nabla f(x)  = - \frac{1}{\sqrt{L}} \left (D^2 f(x) \cdot \dot{x} +  \frac{1}{t} \nabla f(x) \right) 
\end{equation}
which has an additional Hessian damping term with coefficient $1/{\sqrt{L}}$ compared to Su, Boyd and Cand\'es' ODE, \cite{su2014differential}, 
\begin{equation}\label{ODE_Su}\tag{A-ODE}
	\ddot{x} + \frac{3}{t}\dot{x} + \nabla f(x) =0 .
\end{equation}
In \cite{su2014differential}, the authors made a connection between Nesterov method \eqref{genericN1}-\eqref{genericN2}-\eqref{AGD_Convex_params} and \eqref{ODE_Su}.

%Notice that \eqref{OurEqnCC} is a perturbation of \eqref{ODE_Su} of order $\frac{1}{\sqrt{L}}$, and the perturbation goes to zero as $L\to\infty$.  Similar ODEs have been studied by \cite{alvarez2002second}, they have been shown to accelerate gradient descent in continuous time in \cite{attouch2016fast}.  
%
%There is more than one ODE which can be discretized to obtain Nesterov's method. 
%\citet{shi2018understanding} introduced a family of high resolution second order ODEs which also lead to Nesterov's method. In this context, \eqref{OurEqnCC} corresponds to the high-resolution equation with the parameter $\frac{1}{\sqrt{L}}$.  Making the specific choice of first order system \eqref{NODE} considerably simplifies the analysis, allowing for shorter, clearer proofs which generalize to the stochastic gradient case (which was not treated in \cite{shi2018understanding}).  \\
%
%
%In the next section, we are going to generalize the Lyapunov analysis for \eqref{ODE_Su} to the stochastic case using \eqref{NODE}.

Similarly to the strongly convex case, notice that \eqref{OurEqnCC} can be seen as the high-resolution equation from \cite{shi2018understanding} with the highest parameter value $\frac{1}{\sqrt{L}}$. Using a Lyapunov analysis, we will show that the same Lyapunov function of \eqref{ODE_Su} decreases faster along \eqref{NODE}. 
In addition, rewriting \eqref{OurEqnCC} as the first order system \eqref{NODE} allows us to extend the Lyapunov analysis in the stochastic case.

\section{Accelerated Stochastic algorithm: convex case}
\label{section: accelerated convex}
%In the remainder of the paper, we will extend the analysis developed in Section~\ref{section:application} to the accelerated gradient methods. In this section, we consider the case of convex functions.

In this section we study the convergence of $x_k$ in \eqref{FEg} in the stochastic case, $\widehat{g}= \nabla f +e$. As in the strongly convex case, we start by evaluating the dissipation of a Lyapunov function of \eqref{NODE} during one step and then choosing an appropriate learning rate we obtain an accelerated rate of convergence for the last iterate.

\subsection{Dissipation of the Lyapunov function}
We want to use the stochastic approximation, $\hat{g}= \nabla f +e$, in the discretization of \eqref{NODE}, \eqref{FEg}, with a time step $h_k$, becomes
\begin{equation}
\label{FE_Stoc}\tag{Per-FE-C}
\left \{
\begin{aligned}
x_{k+1} - x_k &= \frac{2h_k}{t_{k}}(v_k-x_k) - \frac{h_k}{\sqrt{L}} (\nabla f(y_k) +e_k),\\
v_{k+1} - v_k &=  -  h_k\frac{t_{k}}{2} (\nabla f(y_k) +e_k),
\end{aligned}
\right. 
\end{equation}
where $y_k$ is as in \eqref{FEg}, $t_k = \sum_{i=0}^k h_i$.\\

\begin{definition}
Define the continuous time parametrized Lyapunov function
\begin{equation}\label{LiapCTS}
 E^{C}(t,x,v;\epsilon ):= (t-\epsilon)^2( f(x) - f^* ) +2|v - x^* |^2
\end{equation}	
Define the discrete time Lyapunov function $E_k^{C}$ by 
\begin{equation}
	\label{Ek_defn}
	E_k^{C} =  E^{C}(t_k,x_k,v_k ;h_k) =E^{C}(t_{k-1},x_k,v_k ;0)  
\end{equation}
By convention, $h_{-1}= t_{-1} =0$ which implies $E_0^C = 2|v_0 - x^* |^2$.
\end{definition}

It is well-known that $E^{C}$ is Lyapunov function for \eqref{NODE}, see \cite{su2014differential}. But note, compared to Su-Boyd-Cand\'es' ODE \eqref{ODE_Su}, there is a gap in the dissipation of the Lyapunov function $E^{C}$, which will not be there if the extra term, $-\frac{1}{\sqrt{L}}\nabla f(x)$, was missing.  In particular, if $z$ and $\tilde z$ are solutions of \eqref{ODE_Su} and \eqref{NODE} respectively, then we can prove faster convergence due to the gap.

\begin{proposition}
\label{prop: different gaps convex acc perturbed}
Let $x_k,v_k,y_k$ be sequences generated by \eqref{FE_Stoc}-\eqref{FEg}. Then, for $h_k \leq  \frac{1}{\sqrt{L}}$,
\begin{equation}
\label{eq:main inequality E convex acc perturbed}
E_{k+1} ^{C}- E_k^{C} \leq  h_k\beta_k,
\end{equation}
where $\beta_k  :=  -t_{k} \langle 2(v_k -x^*) - \frac{t_{k}}{\sqrt{L}} \nabla f(y_k) , e_k \rangle + 2h_k t_{k}^2 \left\langle \nabla f(y_k) +\frac{e_k}{2},e_k \right\rangle$.

In particular, we have
\begin{equation}
\label{eq:main inequality E convex expectation}
\mathbb{E}[E_{k+1}^{C}] \leq \mathbb{E}[E_k^{C}] +  h_k^2 t_k^2\sigma^2.
\end{equation}
\end{proposition}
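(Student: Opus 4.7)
The plan is to follow the blueprint of the strongly convex case (Proposition~\ref{thm:Acc GD SC}): split the one-step change
\[
E_{k+1}^{C} - E_{k}^{C} = \bigl[\,t_k^2(f(x_{k+1})-f^*) - t_{k-1}^2(f(x_k)-f^*)\,\bigr] + 2\bigl[\,|v_{k+1}-x^*|^2 - |v_k-x^*|^2\,\bigr],
\]
using $t_k - h_k = t_{k-1}$, bound each bracket separately (in analogy with Lemmas~\ref{le: dissipation f SC} and \ref{le: dissipation quadratic SC}), sum, and discard the non-positive pieces. The asymmetric coefficient $t_{k-1}^2$ on $f(x_k) - f^*$ (rather than $t_k^2$) is exactly what will make the residual gradient terms collapse at the end.

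For the function-value bracket, I would add and subtract $f(y_k)$ in each of the two terms, apply $L$-smoothness to $f(x_{k+1}) - f(y_k)$ via the explicit step $x_{k+1} - y_k = -\frac{h_k}{\sqrt L}(\nabla f(y_k)+e_k)$, and apply convexity at $y_k$ to bound $f(y_k) - f^* \leq \langle \nabla f(y_k), y_k - x^*\rangle$ and $-(f(x_k) - f(y_k)) \leq \langle \nabla f(y_k), y_k - x_k\rangle$. This leaves the weights $t_k^2 - t_{k-1}^2 = 2h_k t_k - h_k^2$ on $\langle \nabla f(y_k), y_k - x^*\rangle$ and $t_{k-1}^2 = t_k^2 - 2h_k t_k + h_k^2$ on $\langle \nabla f(y_k), y_k - x_k\rangle$, together with $|\nabla f(y_k)|^2$ and $e_k$-dependent remainders.

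For the quadratic bracket, I would expand $|v_{k+1}-x^*|^2 - |v_k-x^*|^2 = 2\langle v_k - x^*, v_{k+1}-v_k\rangle + |v_{k+1}-v_k|^2$ and substitute $v_{k+1} - v_k = -\frac{h_k t_k}{2}(\nabla f(y_k)+e_k)$. The key algebraic move is to split $v_k - x^* = (v_k - y_k) + (y_k - x^*)$ and exploit $v_k - y_k = \frac{1-w_k}{w_k}(y_k - x_k) = \frac{t_k - 2h_k}{2h_k}(y_k - x_k)$, coming from \eqref{ycc} with $w_k = 2h_k/t_k$; this converts the $v_k - y_k$ contribution into $-t_k(t_k - 2h_k)\langle y_k - x_k, \nabla f(y_k)\rangle$, matching the form produced by the function-value bracket.

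Upon adding the two brackets, the coefficients of $\langle \nabla f(y_k), y_k - x^*\rangle$ and $\langle \nabla f(y_k), y_k - x_k\rangle$ collapse to $-h_k^2$ and $+h_k^2$, so their sum equals $h_k^2 \langle \nabla f(y_k), x^* - x_k\rangle$; subtracting two convexity inequalities at $y_k$ yields $\langle \nabla f(y_k), x^* - x_k\rangle \leq f^* - f(x_k) \leq 0$, so this piece is discarded. The pure $|\nabla f(y_k)|^2$ coefficient simplifies to $h_k t_k^2 (h_k - 1/\sqrt L) \leq 0$ under $h_k \leq 1/\sqrt L$ and is likewise discarded. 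What remains is exactly the claimed expression $h_k\beta_k$, giving \eqref{eq:main inequality E convex acc perturbed}; the expectation statement \eqref{eq:main inequality E convex expectation} then follows by taking $\mathbb E$, using $\mathbb E[e_k]=0$ to kill all $\langle\cdot,e_k\rangle$ inner products and \eqref{ass: mean and varience error} to bound the surviving $|e_k|^2$ contribution by $\sigma^2$. The main obstacle will be the careful bookkeeping behind the $-h_k^2/+h_k^2$ cancellation: absent strong convexity there is no built-in $-h_k\sqrt\mu E_k^{C}$ term to absorb a residual, so the argument hinges entirely on the asymmetric Lyapunov weighting to make the remaining gradient terms collapse to a sign-definite, discardable quantity.
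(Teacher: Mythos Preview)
Your proposal is correct and follows essentially the same strategy as the paper: bound the function-value bracket via $L$-smoothness and convexity at $y_k$, expand the quadratic bracket, combine, and discard the sign-definite leftovers $-h_k^2(f(x_k)-f^*)$ and $h_k t_k^2(h_k-1/\sqrt{L})|\nabla f(y_k)|^2$.

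The one organizational difference worth noting: the paper packages the function-value estimate through the three-point inequality $f(z)\le f(x)+\langle\nabla f(y),z-x\rangle+\tfrac{L}{2}|z-y|^2$ applied at $(x_k,y_k,x_{k+1})$ and $(x^*,y_k,x_{k+1})$, then takes the convex combination with weights $(1-w_k,w_k)$; by the definition of $y_k$ this immediately collapses the inner-product terms to a single $2h_kt_k\langle\nabla f(y_k),v_k-x^*\rangle$, which cancels \emph{exactly} against the linear part of the quadratic bracket. Your route keeps the $\langle\nabla f(y_k),y_k-x^*\rangle$ and $\langle\nabla f(y_k),y_k-x_k\rangle$ terms separate, splits $v_k-x^*=(v_k-y_k)+(y_k-x^*)$ in the quadratic bracket, and then observes the residual coefficients $-h_k^2,+h_k^2$ at the end. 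Both arrive at the same discardable $-h_k^2(f(x_k)-f^*)$ term; the paper's path just performs the $y_k$-based consolidation earlier, which spares you the coefficient bookkeeping you flagged as the ``main obstacle.''
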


Before proving Proposition \ref{prop: different gaps convex acc perturbed}, using the convexity and the $L$-smoothness of $f$, we explicit the dissipation of the first term in the Lyapunov function, $t_{k-1}^2(f(x_{k} - f^*)$. This is a generalization of the classical inequality obtained in \cite{attouch2016fast} or \cite{su2014differential} in the case $e_k=0$.

\begin{lemma}
\label{le: dissipation tf acc convex pert}
Assume $f$ convex and $L$-smooth. Then,
\begin{align}
\label{eq:dissipation E acc convex pert}
t_k^2(f(x_{k+1} - f^*) &- t_{k-1}^2(f(x_{k} - f^*) \nonumber\\
\leq & -h_k^2 (f(x_k) -f^*) + 2h_kt_k\langle \nabla f(y_k),v_k -x^*  \rangle  -\left(\frac{1}{\sqrt{L}} -\frac{h_k}{2}\right)h_k t_k^2|\nabla f(y_k) |^2 \nonumber\\
 &-\frac{h_k t_k^2}{\sqrt{L}}\langle \nabla f(y_k), e_k \rangle +h_k^2 t_k^2\left\langle \nabla f(y_k) +\frac{e_k}{2},e_k \right\rangle.
\end{align}
\end{lemma}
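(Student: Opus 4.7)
The plan is to split the left-hand side as $t_k^2[f(x_{k+1}) - f(y_k)] + [t_k^2(f(y_k) - f^*) - t_{k-1}^2(f(x_k) - f^*)]$ and bound each bracket by a different tool.

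For the first bracket, I apply $L$-smoothness of $f$ at $y_k$, namely $f(x_{k+1}) \leq f(y_k) + \langle \nabla f(y_k), x_{k+1} - y_k\rangle + \tfrac{L}{2}|x_{k+1} - y_k|^2$. Combining the $x$-equation of \eqref{FE_Stoc} with the definition of $y_k$ gives $x_{k+1} - y_k = -\tfrac{h_k}{\sqrt{L}}(\nabla f(y_k) + e_k)$; substituting this and expanding the square yields, after multiplying by $t_k^2$, precisely the $-(1/\sqrt{L} - h_k/2)h_k t_k^2 |\nabla f(y_k)|^2$ term together with the two $e_k$-dependent terms in the target right-hand side. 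This is the standard ``one step of descent under $L$-smoothness'' computation.

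For the second bracket, the key step is a three-point convexity inequality. Taking the weights $w_k = 2h_k/t_k$ and $1 - w_k$ and forming the corresponding combination of
\[
f^* \geq f(y_k) + \langle \nabla f(y_k), x^* - y_k\rangle, \qquad f(x_k) \geq f(y_k) + \langle \nabla f(y_k), x_k - y_k\rangle,
\]
the $y_k$ terms on the gradient side collapse (because $y_k = (1-w_k)x_k + w_k v_k$) to leave only $w_k(x^* - v_k)$, giving
\[
f(y_k) - f^* \leq (1-w_k)(f(x_k) - f^*) + w_k\langle \nabla f(y_k), v_k - x^*\rangle.
\]
Multiplying this by $t_k^2$ and using $t_k^2(1-w_k) = t_k^2 - 2h_k t_k = t_{k-1}^2 - h_k^2$ (since $t_{k-1} = t_k - h_k$) converts the second bracket into the claimed $-h_k^2(f(x_k) - f^*) + 2h_k t_k\langle \nabla f(y_k), v_k - x^*\rangle$.

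Adding the two bracket bounds yields \eqref{eq:dissipation E acc convex pert}. The main obstacle is spotting the three-point convexity inequality: a proof that applies convexity to $f(y_k) - f(x_k)$ on its own produces only a $\langle \nabla f(y_k), v_k - x_k\rangle$ term and leaves behind an uncontrolled $2h_k t_k(f(x_k) - f^*)$ contribution, yielding a strictly weaker bound. It is only by combining the two convexity inequalities with weights matched to the convex-combination structure of $y_k$ that the $v_k - x^*$ form on the right-hand side emerges --- which is exactly what the continuous-time dissipation of the Lyapunov function along \eqref{NODE} predicts.
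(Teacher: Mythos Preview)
Your proof is correct and follows essentially the same approach as the paper: the paper packages the $L$-smoothness and convexity steps into a single three-point inequality $f(z)\le f(x)+\langle\nabla f(y),z-x\rangle+\tfrac{L}{2}|z-y|^2$, applies it at $(x_k,y_k,x_{k+1})$ and $(x^*,y_k,x_{k+1})$, and then takes the $(1-w_k,w_k)$ convex combination, whereas you factor out the common $L$-smoothness step first and then combine the two convexity inequalities---but the key mechanism (the weights matched to $y_k=(1-w_k)x_k+w_kv_k$ collapsing the gradient term to $\langle\nabla f(y_k),v_k-x^*\rangle$, together with $t_k^2(1-w_k)=t_{k-1}^2-h_k^2$) is identical.
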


\begin{proof}
The proof is a perturbed version of the one in \cite{beck2009fast,su2014differential,attouch2016fast}. First we prove the following inequality: For all $x,y,z$, $f$ satisfies
\begin{equation}
\label{eq: ineq 3 points}
f(z) \leq f(x) +\langle \nabla f(y), z-x \rangle +\frac{L}{2}|z-y|^2.
\end{equation}

By $L$-smoothness, 
\[
f(z) - f(x) \leq f(y) - f(x)  + \langle \nabla f(y) , z-y \rangle +\frac{ L}{2}|z-y|^2.
\]
and since $f$ is convex, 
\[
f(y) - f(x) \leq \langle \nabla f(y) , y -x \rangle.
\]
\eqref{eq: ineq 3 points} is obtained combining these two inequalities.\\

Now apply inequality \eqref{eq: ineq 3 points} at $(x,y,z)=(x_k,y_k, x_{k+1})$:  
\begin{align*}
f(x_{k+1})  
 \leq &  f(x_k) +\langle \nabla f(y_k), x_{k+1}-x_k \rangle +\frac{L}{2}|x_{k+1}-y_k|^2\\
\leq  & f(x_k) +\frac{2h_k}{t_k}\langle \nabla f(y_k), v_k -x_k \rangle -\frac{h_k}{\sqrt{L}}|\nabla f(y_k) |^2   && \text{by \eqref{FE_Stoc}}\\
&-\frac{h_k}{\sqrt{L}}\langle \nabla f(y_k), e_k \rangle + \frac{h_k^2}{2}|\nabla f(y_k) +e_k|^2.
\end{align*}
Expanding the square, we obtain
\begin{align}
\label{eq: inq 1}
f(x_{k+1}) \leq & f(x_k) +\frac{2h_k}{t_k}\langle \nabla f(y_k), v_k -x_k \rangle -\left(\frac{1}{\sqrt{L}} -\frac{h_k}{2}\right)h_k |\nabla f(y_k) |^2 \\
& -\frac{h_k}{\sqrt{L}}\langle \nabla f(y_k), e_k \rangle + h_k^2\left\langle \nabla f(y_k) +\frac{e_k}{2},e_k \right\rangle.
\end{align}

If we apply \eqref{eq: ineq 3 points} also at $(x,y,z)=(x^*,y_k, x_{k+1})$ we obtain
\begin{align*}
f(x_{k+1})  \leq &  f^* +\langle \nabla f(y_k), x_{k+1}-x^* \rangle +\frac{L}{2}|x_{k+1}-y_k|^2\\
 \leq &  f^* +\langle \nabla f(y_k), y_k -x^* \rangle -\frac{h_k}{\sqrt{L}}|\nabla f(y_k) |^2 && \text{by \eqref{FE_Stoc}}\\
 & -\frac{h_k}{\sqrt{L}}\langle \nabla f(y_k), e_k \rangle + \frac{h_k^2}{2}|\nabla f(y_k) +e_k|^2,
\end{align*}
then, expanding the square
\begin{multline}
\label{eq: inq 2}
f(x_{k+1})  \leq  f^* +\langle \nabla f(y_k), y_k -x^* \rangle -\left(\frac{1}{\sqrt{L}} 
-\frac{h_k}{2}\right)h_k |\nabla f(y_k) |^2\\
-\frac{h_k}{\sqrt{L}}\langle \nabla f(y_k), e_k \rangle +h_k^2 \left\langle \nabla f(y_k) +\frac{e_k}{2},e_k \right\rangle.
\end{multline}

Summing $\left( 1 - \frac{2h_k}{t_k}\right)$\eqref{eq: inq 1} and $\frac{2h_k}{t_k}$\eqref{eq: inq 2}, we have
\begin{align*}
f(x_{k+1}) -f^*  \leq &   \left( 1- \frac{2h_k}{t_k} \right)(f(x_k) -f^*)  + \frac{2h_k}{t_k}\langle \nabla f(y_k),v_k -x^*  \rangle -\left(\frac{1}{\sqrt{L}} -\frac{h_k}{2}\right)h_k |\nabla f(y_k) |^2 \\
&    -\frac{h_k}{\sqrt{L}}\langle \nabla f(y_k), e_k \rangle  +h_k^2 \left\langle \nabla f(y_k) +\frac{e_k}{2},e_k \right\rangle.
\end{align*} 
Then,
\begin{align*}
t_k^2(f(x_{k+1}) -f^*) \leq &  \left( t_k- 2h_k \right)t_k(f(x_k) -f^*) + 2h_k t_k\langle \nabla f(y_k),v_k -x^*  \rangle  -\left(\frac{1}{\sqrt{L}} -\frac{h_k}{2}\right)h_k t_k^2|\nabla f(y_k) |^2 \\
&  -\frac{h_k t_k^2}{\sqrt{L}}\langle \nabla f(y_k), e_k \rangle 
+h_k^2 t_k^2\left\langle \nabla f(y_k) +\frac{e_k}{2},e_k \right\rangle.\\
\end{align*}
Observe that $\left( t_k- 2h_k \right)t_k = (t_{k-1} -h_k)(t_{k-1} +h_k) = t_{k-1}^2 - h_k^2$, which gives \eqref{eq:dissipation E acc convex pert}.
%this gives
%\begin{align*}
%t_k^2(f(x_{k+1}) -f^*) &\leq  \left( t_{k-1}^2- h_k^2 \right)(f(x_k) -f^*) + 2h_k t_k\langle \nabla f(y_k),v_k -x^*  \rangle \\
%& -\left(\frac{1}{\sqrt{L}} -\frac{h_k}{2}\right)h t_k^2|\nabla f(y_k) |^2   -\frac{h_k t_k^2}{\sqrt{L}}\langle \nabla f(y_k), e_k \rangle \\
%&+h_k^2 t_k^2\left\langle \nabla f(y_k) +\frac{e_k}{2},e_k \right\rangle,
%\end{align*}
%.
\end{proof}

\begin{proof}[Proof of Proposition \ref{prop: different gaps convex acc perturbed}]

By definition of $v_{k+1}$ in \eqref{FE_Stoc}, we have
\begin{align*}
2|v_{k+1} -x^* |^2 &-  2|v_{k} -x^* |^2 \\
=& -2h_k t_k \langle v_k -x^*, \nabla f(y_k) +e_k \rangle + \frac{h_k^2t_k^2}{2}|\nabla f(y_k) +e_k |^2 && \text{$1$-smoothness of the quadratic term}\\
 = & -2h_k t_k \langle v_k -x^*, \nabla f(y_k) + \frac{h_k^2t_k^2}{2}|\nabla f(y_k)|^2 \\
& -2h_k t_k \langle v_k -x^*, e_k \rangle  + h_k^2t_k^2\left\langle \nabla f(y_k) +\frac{e_k}{2},e_k \right\rangle && \text{expanding the square.}
\end{align*}
Therefore, combining it with Lemma \ref{le: dissipation tf acc convex pert}, we obtain
\begin{align*}
E_{k+1}^{C}  - E_k^{C}   
&\leq  -h_k^2 (f(x_k) -f^*) -\left(\frac{1}{\sqrt{L}} - h_k \right)h_k t_k^2|\nabla f(y_k) |^2 \\
&-2h_k t_k  \langle v_k -x^* - \frac{t_k}{\sqrt{L}}\nabla f(y_k) , e_k \rangle + 2h_k^2t_k^2 \left\langle \nabla f(y_k) +\frac{e_k}{2},e_k \right\rangle,
\end{align*}
and, since $h_k \leqslant \frac{1}{\sqrt{L}}$, \eqref{eq:main inequality E convex acc perturbed} is proved.

For the second part of the proposition, observe that 
\[
\mathbb{E}[\beta_k] = -t_{k} \langle 2(v_k -x^*) - \frac{t_{k}}{\sqrt{L}} \nabla f(y_k) , \mathbb{E}[e_k] \rangle + 2h_k t_{k}^2 \left\langle \nabla f(y_k) ,\mathbb{E}[e_k] \right\rangle + h_k t_{k}^2\mathbb{E}[|e_k|^2].
\]
Since $e_k$ satisfies \eqref{ass: mean and varience error}, $\mathbb{E}[\beta_k] = h_k t_{k}^2$ which concludes the proof.
\end{proof}

\subsection{Convergence rate for the last iterate}
\label{subsection: variable time step acc convex}
 Proposition \ref{prop: different gaps convex acc perturbed} gives
\[
\mathbb{E}[E^{C}_{k+1} ] - \mathbb{E}[E^{C}_{k}] \leq h_k^2t_k^2 \sigma^2.
\]
In addition, by definition of $E_k^C$, \eqref{Ek_defn},
\begin{equation}
\label{eq: bound below E^C}
\mathbb{E}[E^{C}_{k} ]  \geq t_{k-1}^2 \mathbb{E}[f(x_{k})-f^*].
\end{equation}

\begin{proposition}
\label{prop: decrease expectation E_k convex acc}
Assume $h_k := \frac{c}{(k+1)^{3/4}} \leq \frac{1}{\sqrt{L}}$ and $t_k =\sum_{i=0}^k h_i$, then for all $k\geqslant 0$, 
\begin{align*}
\mathbb{E}[f(x_k)] - f^* \leq \frac{\dfrac{1}{16c^2}E_0^C + c^2 \sigma^2(1 +\log(k))}{\sqrt{k} }
\end{align*}
\end{proposition}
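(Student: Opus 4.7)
The plan is to start from the expected dissipation \eqref{eq:main inequality E convex expectation}, telescope it across $k$ steps, and then divide by the coercivity factor on $f(x_k)-f^*$ inside $E_k^C$. Summing \eqref{eq:main inequality E convex expectation} from $0$ to $k-1$, and recalling that $E_0^C$ is deterministic, yields
\[
\mathbb{E}[E_k^C]\ \leq\ E_0^C\ +\ \sigma^2 \sum_{i=0}^{k-1} h_i^2\, t_i^2,
\]
and combining with \eqref{eq: bound below E^C} gives the master inequality
\[
\mathbb{E}[f(x_k)-f^*]\ \leq\ \frac{E_0^C + \sigma^2 \sum_{i=0}^{k-1} h_i^2 t_i^2}{t_{k-1}^2}.
\]
So the proof reduces to quantitative estimates on $t_i$, upper bound in the numerator sum and lower bound in the denominator.

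Next I would substitute $h_i = c(i+1)^{-3/4}$ and compare $t_i = c\sum_{j=1}^{i+1} j^{-3/4}$ to the integral $\int_0^{i+1} x^{-3/4}\,dx = 4(i+1)^{1/4}$. Since $x^{-3/4}$ is decreasing, the standard step-function bound gives the upper estimate $t_i \leq 4c(i+1)^{1/4}$. Inserting this into the noise sum produces the clean bound
\[
h_i^2 t_i^2\ \leq\ \frac{c^2}{(i+1)^{3/2}}\cdot 16c^2 (i+1)^{1/2}\ =\ \frac{16c^4}{i+1},
\]
so that $\sum_{i=0}^{k-1} h_i^2 t_i^2 \leq 16c^4\sum_{j=1}^k 1/j \leq 16c^4(1+\log k)$, which is exactly the harmonic-type accumulation expected from the $k^{3/4}$ learning rate schedule.

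For the lower bound on $t_{k-1}^2$ I would again use the integral comparison, this time from below: $\sum_{j=1}^k j^{-3/4} \geq \int_1^{k+1} x^{-3/4}\,dx = 4[(k+1)^{1/4}-1]$, which matches $4k^{1/4}$ up to a lower-order term. Arguing either that the statement applies for $k$ large enough (the initial iterates being absorbed into a warm-up, as was done in the strongly convex case with \eqref{Ecrit}), or tightening the constant slightly, I conclude $t_{k-1}^2 \geq 16c^2\sqrt{k}$. Plugging both estimates into the master inequality,
\[
\mathbb{E}[f(x_k)-f^*]\ \leq\ \frac{E_0^C + 16c^4\sigma^2(1+\log k)}{16c^2\sqrt{k}}\ =\ \frac{\tfrac{1}{16c^2}E_0^C + c^2\sigma^2(1+\log k)}{\sqrt{k}},
\]
which is the claim.

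The main technical obstacle is the lower bound on $t_{k-1}$: the naive integral comparison yields $t_{k-1}\geq 4c[(k+1)^{1/4}-1]$ rather than $4c\,k^{1/4}$, so the constant $1/(16c^2)$ in front of $E_0^C$ is slightly optimistic for small $k$. In a careful write-up I would either (i) phrase the result for $k$ beyond a mild threshold after which $(k+1)^{1/4}-1\geq k^{1/4}$ up to an absorbable factor, or (ii) absorb this deficit into the definition of $E_0^C$ via the initialization convention $v_0=x_0$ in \eqref{E0_init}. Everything else in the argument — the telescoping, the upper bound on $t_i$, and the harmonic sum — is routine once Proposition~\ref{prop: different gaps convex acc perturbed} is in hand.
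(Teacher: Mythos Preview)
Your proof follows the paper's exactly: telescope \eqref{eq:main inequality E convex expectation}, bound $t_i \leq 4c(i+1)^{1/4}$ and $\sum_{i=0}^{k-1} h_i^2 t_i^2 \leq 16c^4(1+\log k)$ by integral comparison, lower-bound $t_{k-1}$, and divide. Your hesitation at the final step is well founded --- the paper simply asserts $4c[(k+1)^{1/4}-1]\geq 4c\,k^{1/4}$, which is false for every $k\geq 1$, so the displayed constant $\tfrac{1}{16c^2}$ is indeed slightly optimistic and requires one of the fixes you propose.
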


\begin{proof}
Summing \eqref{eq:main inequality E convex expectation} from $0$ to $k-1$, we obrain
\[
 \mathbb{E}[E_k^C] \leq E_0^C + \sigma^2\sum_{i=0}^{k-1} h_i^2 t_i^2.
\]
Now we want to prove that $ \mathbb{E}[E_k^C]$ is bounded.

By definition of $t_i$, 
\begin{align*}
t_i = \sum_{j=0}^i \frac{c}{(i+1)^{3/4}} \leq & \sum_{j=0}^i \int_j^{j+1} \frac{c}{x^{3/4}} \,dx && \text{by comparison series-intergal}\\
\leq & \int_0^{i+1} \frac{c}{x^{3/4}} \,dx \\
\leq &  4c(i+1)^{1/4}.
\end{align*}
So $t_i^2 \leq 16 c^2 (i+1)^{1/2}$ and 
\begin{align*}
\sum_{i=0}^{k-1} h_i^2 t_i^2 \leq  16 c^4\sum_{i=0}^{k-1} \frac{(i+1)^{1/2}}{(i+1)^{3/2}}
\leq & 16 c^4\sum_{i=0}^{k-1} \frac{1}{i+1}\\
\leq & 16 c^4\left( 1 + \int_{i=1}^{k} \frac{1}{x} \right)&& \text{by comparison series-intergal}\\
\leq & 16 c^4 ( 1 +\ln(k)).
\end{align*} 
Then, combine this with \eqref{eq: bound below E^C} to obtain
\begin{equation}
\label{eq: inequality expectation t_k f}
t_{k-1}^2 \mathbb{E}[f(x_{k})-f^*] \leq E_0^C + 16 c^4 ( 1 +\ln(k))\sigma^2.
\end{equation}

Now, remark that
\begin{align*}
t_{k-1} =   \sum_{i=1}^{k} \frac{c}{i^{3/4}}
\geq  \int_1^{k+1} \frac{c}{x^{3/4}}\,dx 
\geq  4c ( (k+1)^{1/4} - 1) 
\geq  4c k^{1/4} .
\end{align*}
This implies that $t_{k-1}^2 \geq  16 c^2 \sqrt{k}$ and concludes the proof dividing by $t_{k-1}^2$ in \eqref{eq: inequality expectation t_k f}.
\end{proof}

\bibliographystyle{abbrvnat}
\bibliography{sections/AccelerationBib}

%\begin{appendix}
%\input{sections/N2N_appendix_abstract_lyap}
%\input{sections/N2N_appendix_GD}
%\input{sections/N2N_appendix_acc_convec}
%\input{sections/N2N_appendix_acc_str_convex}
%\end{appendix}
\end{document}